\colorlet{linkcolour}{blue!40!black}
\colorlet{urlcolour}{magenta!30!black}
\colorlet{linkcolour}{blue!50!black}
\colorlet{urlcolour}{magenta}
\newcommand*{\printdate}{\today}
\newcommand*\Poisson[2]{\{#1,#2\}}
\newcommand*\bracket[2]{\left\langle #1,#2\right\rangle}
\newcommand*\Bracket[2]{\Big\langle #1,#2\Big\rangle}
\newcommand*\dd{\mathrm{d}}
\newcommand*\sympman{\mathcal{P}}
\newcommand*\Man{\mathcal{M}}
\newcommand*\ManQ{\mathcal{Q}}
\newcommand*\Mp{\Man^{H}}
\newcommand*\Mpd{\widetilde{\Man}^{H}}
\newcommand*\Tan{\mathsf{T}}
\newcommand*\ee{\mathrm{e}}
\newcommand*\ii{\mathrm{i}}
\newcommand*\Span{\operatorname{span}}
\newcommand*\R{\mathbf{R}}
\newcommand*\RR{\mathbf{R}}
\newcommand*\ZZ{\mathbf{Z}}
\newcommand*\CC{\mathbf{C}}
\newcommand*\cons{V}
\newcommand*\ncons{m}
\newcommand*\ddp[2]{\frac{∂ #1}{∂ #2}}
\newcommand*\qb{\overline{q}}
\newcommand*\pb{\overline{p}}
\newcommand*\inv{^{-1}}
\newcommand*\qv{\mathbf{q}}
\newcommand*\pv{\mathbf{p}}
\newcommand*\gv{\mathbf{g}}
\newcommand*\Hr{\mathcal{H}}
\newcommand*\Shake{{\smaller SHAKE}\xspace}
\newcommand*\Rattle{{\smaller RATTLE}\xspace}
\newcommand*\DAE{{\smaller DAE}\xspace}
\newcommand*\ODE{{\smaller ODE}\xspace}
\newcommand*\fib[1]{[#1]}
\newcommand*\Orbit{\mathcal{O}}
\newcommand*\sorth{^{\perp}}
\newcommand*\trans{^{\mathsf{T}}}
\newcommand*\TMperp[1][]{\Tan_{#1}\Man\sorth}
\newcommand*\mom{\mathsf{J}}
\newcommand*\launch[1]{Γ_{#1}}
\newcommand*\cfunc[2][]{\bracket{\mom\if\relax\detokenize{#1}\relax\else(#1)\fi}{#2}}
\newcommand*\Ham[1]{{X}_{#1}}
\newcommand*\codim{\operatorname{codim}}
\newcommand*\proj{\pi}
\newcommand*\Proj{\Pi}
\newcommand*\Mq{[\Man]}
\newcommand*\omegaP{\omega}
\newcommand*\omegaM{\nu}
\newcommand*\omegaMp{\varpi}
\newcommand*\omegaMpd{\tilde{\omegaMp}}
\newcommand*\Hkx{H_{\fib{x}}}
\newcommand*\HM{H_\Man}
\newcommand*\incl[1]{\imath_{#1}}
\newcommand*\inclM{\incl{\Man}}
\newcommand*\inclMp{\incl{\Mp}}
\newcommand*\interior[2]{\mathrm{i}_{#1}#2}
\newcommand*\Smap[1]{S_{#1}}
\newcommand*\Sh{\Smap{h}}
\newcommand*\Rmap[1]{R_{#1}}
\newcommand*\Rh{\Rmap{h}}
\newcommand*\dflow[1][h]{\varphi_{#1}}
\newcommand*\dflowh{\dflow}
\tikzset{
  >=stealth,
  border/.style={draw},
  fiber/.style={draw,thick,blue!70!black},
  mzero/.style={draw,black,thick},
  mminus/.style={draw,blue,dotted},
  mplus/.style={draw,black,thick,dashed},
  flight/.style={draw,black,thick, densely dotted, decoration={markings, mark=at position 0.6 with {\arrow{>}}}, postaction={decorate}},
  helpnode/.style={shape=coordinate},
  launching/.style={fill=black},
  crashing/.style={fill=blue},
  fiberslide/.style={fiber,ultra thick,},
  rattle/.style={color=green!50!black},
  rattleslide/.style={draw,fiber,ultra thick,},
  proj/.style={draw,dashed},
  quotientmanifold/.style={draw,thick},
  zplus/.style={draw=black},
}
\title{Geometric Generalisations of \Shake and \Rattle}
\author[1]{Robert I McLachlan\thanks{\url{r.mclachlan@massey.ac.nz}}}
\author[2]{Klas Modin\thanks{\url{klas.modin@chalmers.se}}}
\author[3]{Olivier Verdier\thanks{\url{olivier.verdier@math.ntnu.no}}}
\author[1]{Matt Wilkins\thanks{\url{m.c.wilkins@massey.ac.nz}}}
\affil[1]{
	Institute of Fundamental Sciences,
	Massey University,
	Private Bag 11 222, Palmerston North 4442, New Zealand
}
\affil[2]{
	Department of Mathematical Sciences, 
	Chalmers University of Technology, 
	SE--412 96 G\"oteborg, 
	Sweden
}
\affil[3]{
	Department of Mathematical Sciences,
	NTNU,
	7491 Trondheim,
	Norway
}
\let\chapter\@undefined\makeatother
\newcommand{\num}[1]{#1} 
\newcommand{\newtheoremalias}[2]{%
\newaliascnt{#1}{theorem}%
\newtheorem{#1}[#1]{#2}%
\aliascntresetthe{#1}%
}
\theoremstyle{plain}
\newtheorem{theorem}{Theorem}[section]
\theoremstyle{definition}
\newtheorem{assumption}{Assumption}
\theoremstyle{definition}
\begin{document}


\maketitle

\begin{abstract}
	A geometric analysis of the \Shake and \Rattle methods for constrained Hamiltonian problems is carried out.
	The study reveals the underlying differential geometric foundation of the two methods, and the exact relation between them.
	In addition, the geometric insight naturally generalises \Shake and \Rattle to allow for a strictly larger class of constrained Hamiltonian systems than in the classical setting.
	
	In order for \Shake and \Rattle to be well defined, two basic assumptions are needed. First, a nondegeneracy assumption, which is a condition on the Hamiltonian, i.e., on the dynamics of the system. Second, a coisotropy assumption, which is a condition on the geometry of the constrained phase space.
	Non-trivial examples of systems fulfilling, and failing to fulfill, these assumptions are given. 
\end{abstract}

\begin{description}
	\item[Keywords] Symplectic integrators · Constrained Hamiltonian systems · Coisotropic
submanifolds · Differential algebraic equations
\item[Mathematics Subject Classification (2010)] 37M15 · 65P10 · 70H45 · 65L80
\end{description}

\newpage
\listoftodos

\newpage

\tableofcontents

\section{Introduction}\label{sec:intro}

\Shake and \Rattle are two commonly used numerical integration methods for Hamiltonian problems subject to holonomic constraints~\cite{RyCiBe1977,An1983,LeSk1994,Ja1996,Re96}.
The difference between the two methods is that \Rattle preserves ``hidden'' constraints, whereas \Shake does not.
For details and a historical account, see the monographs~\cite[\S\!~7.2]{LeRe2004} and~\cite[\S\!~VII.1.4]{HaLuWa06}.

In this paper we give a rigorous geometric analysis of the \Shake and \Rattle methods. 
Our approach is based on the observation by \citet{Re96} that \Shake and \Rattle may be expressed using flow maps.
The analysis sheds light on the underlying ``geometric foundation'' of the two methods, and also on the exact relation between them.
In addition, the  geometric insight allows us to integrate a larger class of constrained problems than before.
Indeed, the geometric versions of \Shake and \Rattle work for \emph{coisotropic constraints}.
This class of constraints is strictly larger than the class of holonomic constraints.
In particular, they may depend on both position and momentum.

Throughout the paper we utilise the language of differential geometry.
The main reason for doing so is \emph{not} to generalise \Shake and \Rattle from~$\R^{2d}$ to manifolds, but rather because this notation makes the geometric structures more transparent.
However, in order to link to the standard literature on \Shake and \Rattle, we give many key results also in the classical $\R^{2d}$~setting as examples.

Our notation closely follows that of \citet{MaRa99}.
In particular, if~$\Man$ and~$\mathcal{N}$ are two manifolds and $f\in\mathcal{C}^{\infty}(\Man,\mathcal{N})$, then $\Tan f\colon \Tan\Man\to\Tan\mathcal{N}$ denotes the tangent derivative.
If $\mathcal{N}\subset\Man$ is a submanifold, then $\incl{\mathcal{N}}\colon \mathcal{N}\to\Man$ denotes the inclusion, and the pull-back of differential forms from $\Man$ to $\mathcal{N}$ is denoted $\incl{\mathcal{N}}^*$.
If $(\sympman,\omegaP)$ is a symplectic manifold, and $H\in\mathcal{C}^{\infty}(\sympman)$, then $\Ham{H}$ denotes the corresponding Hamiltonian vector field, and the Poisson bracket is denoted~$\Poisson{\cdot}{\cdot}$.
The contraction between a vector~$X$ and a differential form $\alpha$ is denoted $\interior{X}{\alpha}$.

We continue this section with an outline of the paper and the main results.

\paragraph{Problem formulation} 
\label{par:problem_class}
%
Let $(\sympman,\omegaP)$ be a symplectic manifold, $H\in\mathcal{C}^\infty(\sympman)$ a smooth function, and $\Man\subset \sympman$ a submanifold.
Given $(\sympman,\omegaP,H,\Man)$, the problem is to find a smooth curve $t\mapsto\gamma(t)$ such that
\begin{subequations}\label{eq:constrained_system}
\begin{equation}\label{eq:constrained_system_extrinsic}
	\inclM^*\big(\interior{\dot\gamma}{\omegaP} - \dd H\big) = 0 , \qquad \gamma(t)\in \Man.
\end{equation}

Equation~\eqref{eq:constrained_system_extrinsic} looks like a Hamiltonian system on $\sympman$, but constrained to stay on the submanifold~$\Man$, called the \emph{constraint manifold}.
It can be rewritten as
\begin{equation}\label{eq:intrinsic_maineq}
	\interior{\dot\gamma}{\omegaM} - \dd \HM = 0,
\end{equation}
\end{subequations}
where $\omegaM = \inclM^*\omegaP$ and $\HM = \inclM^* H$.
From this formulation it is clear that equation~\eqref{eq:constrained_system} is intrinsic to~$\Man$, i.e., it only depends on objects defined on~$\Man$.


\begin{example}\label{ex:classical_setting_eq}
	Let $\sympman = \RR^{2d}$ with canonical coordinates $z=(q,p)$, and let~$\Man = g^{-1}(\{ 0 \})$, where $g=(g_1,\ldots,g_\ncons)\in\mathcal{C}^\infty(\RR^{2d},\RR^m)$.
	%
	Equation~\eqref{eq:constrained_system} then takes the form
	\begin{equation}\label{eq:constrained_system_coordinates_long}
	\begin{split}
		\dot{q} &= H_p(q,p) + g_p(q,p)\trans λ \\
		\dot{p} &= -H_q(q,p) - g_q(q,p) \trans λ \\
		0 &= g(q,p)
		,
	\end{split}
	\end{equation}
	where  $g_q$ and $g_p$ are the partial Jacobian matrices, and $\lambda = (\lambda_1,\ldots,\lambda_m)$ are Lagrange multipliers.
	Notice that if~$g$ does not depend on~$p$, then this reduces to a canonical Hamiltonian system with holonomic constraints.

	Owing to the Hamiltonian structure of the equations, the reduction of the differential--algebraic equation (\DAE )~\eqref{eq:constrained_system_coordinates_long} to an ordinary differential equation (\ODE ) takes a particular form.
This was already noticed by \citet[\S\!~1]{dirac2001}, and later perfected in~\cite{Go1978}.
\end{example}


\paragraph{Existence and uniqueness} 
\label{par:existence_and_uniqueness}

Since equation~\eqref{eq:constrained_system} is intrinsic to~$\Man$, it is clear that any condition or assumption for existence and uniqueness should also be intrinsic, so it is enough to investigate existence and uniqueness intrinsically on~$\Man$.

The 2--form $\omegaM$ is closed, but in general degenerate, so $(\Man,\omegaM)$ is not, in general, a symplectic manifold.
Instead, it is a \emph{presymplectic} manifold.

The kernel of $\omegaM$ defines a distribution on $\Man$ denoted $\ker\omegaM$.
As detailed in~\autoref{sub:foliation}, the kernel distribution is \emph{integrable}.
Thus, for each $z\in\Man$, there is a submanifold $\fib{z}\subset\Man$ such that $\Tan_x\fib{z} = \ker\omegaM_x$ for each $x\in\fib{z}$.

If $\gamma(t)$ is a solution to~\eqref{eq:intrinsic_maineq}, then~$\dd \HM(\gamma(t)) \in \omegaM(\Tan_{\gamma(t)}\Man)$.
Thus, solutions stay on the \emph{hidden constraint set}, given by
\begin{equation} \label{eq_def_Mp}
	\Mp = \big\{\, z\in\Man : \dd \HM(z) \in \omegaM(\Tan_z\Man) \,\big\} .
\end{equation}
In particular, a necessary condition for equation~\eqref{eq:intrinsic_maineq} to have a solution is that the initial data fulfils $\gamma(0)\in \Mp$, which is assumed from here on.

As is further explained in \autoref{sec:presymplectic_dynamics}, a sufficient condition for (local) existence and uniqueness of solutions of equation~\eqref{eq:intrinsic_maineq}, and hence equation~\eqref{eq:constrained_system}, is the following.

\begin{assumption}[Nondegeneracy] \label{assump:non_degeneracy}
	For any $z\in\Man$, the critical points of the function $H_{\fib{z}}=\incl{\fib{z}}^*\HM$ are nondegenerate.
	
	%
\end{assumption}

\begin{example}
	For the classical setting in \autoref{ex:classical_setting_eq},
	\[
		\Mp = \big\{\, z\in \RR^{2d}: g_i(z)=0,\, \Ham{H}(z)\cdot \nabla g_i(z)=0, \, i=1,\ldots,\ncons \,\big\}
	\]
	and \autoref{assump:non_degeneracy} means that the matrix~$g_z(z)\trans H_{zz}(z) g_z(z)$ is invertible for~$z\in\Mp$.
	If $g$ does not depend on $p$, then this is slightly weaker than the classical assumption that $g_q H_{pp} g_q$ is invertible (see \autoref{sec:holonomic_solvability}).
\end{example}


\paragraph{Geometric \Shake and \Rattle} 
\label{par:geometric_shake_and_rattle}

We now define \Shake and \Rattle geometrically (see \autoref{fig_shake} for an illustration of the geometrical setting).


\begin{definition}[Geometric \Shake]\label{def:geometric_shake}
	Let $\dflowh$ be a method approximating $\exp(h\Ham{H})$ for a given time step $h$.
	The \Shake mapping $\Sh\colon \Man\ni z_0 \mapsto z_1^-\in \Man$ is defined implicitly by
	\begin{equation*}
			\dflowh(y) \in \Man, \quad y \in \fib{z_0}\cap O,\quad
			z_1^- = \dflowh(y) ,
	\end{equation*}
	where $O\subset\Man$ is a suitable open subset containing~$\Mp$.
\end{definition}

\begin{definition}[Geometric \Rattle]\label{def:geometric_rattle}
	Let $\dflowh$ be a method approximating $\exp(h\Ham{H})$ for a given time step $h$.
	The \Rattle mapping $\Rh\colon \Mp\ni z_0 \mapsto z_1\in \Mp$ is defined implicitly by
	\begin{equation*}
			z_1 \in\fib{z_1^-}\cap\Mp, \quad z_1^- = \Sh(z_0).
	\end{equation*}
\end{definition}

These are abstract definitions of \Shake and \Rattle.
In order to practically be able to implement them, an implicit definition of $\Man$ in terms of constraint functions, and a parameterisation of $\fib{z_0}$, is needed.
This issue is discussed in \autoref{sec:geometry_of_shake_and_rattle}, and is related to \autoref{assump:coisotropy} introduced below.

In the holonomic case it is already known that \Shake and \Rattle essentially yield the same method, since the projection step at the end of \Rattle is ``neutralised'' by the projection step in \Shake.
This observation is made geometrically precise in \autoref{sec:geometry_of_shake_and_rattle}, where we show that \Shake and \Rattle are two different representations of the same fibre mapping.

\begin{example}
\label{ex:pendulum}
	It may be illuminating to understand the effect of both methods in a familiar example, that of a planar pendulum, realised as a constrained mechanical system.
	The ambient space is $\RR^4$, and the constraint, which is holonomic, is given by $g(\qv,\pv) \coloneqq \|\qv\|^1 - 1$.
	The constraint manifold $\Man$ is thus a three-dimensional submanifold of $\RR^4$.
	The Hamiltonian is that of an unconstrained mass in a constant gravity field, $H(\qv,\pv) \coloneqq \frac{1}{2}\|\pv\|^2 + q_2$.
	The hidden constraint manifold is the submanifold of $\Man$ which consists of the points where the velocity is tangent to $\Man$, that is
	\begin{equation}
	\Mp = \big\{\, (\qv,\pv) \in \Man : \qv \cdot \pv = 0 \,\big\}
	.
	\end{equation}
	For this particular case, the \Shake and \Rattle algorithms are illustrated on \autoref{fig_pendulum}.
\end{example}

\begin{figure}
	\centering
	\includegraphics{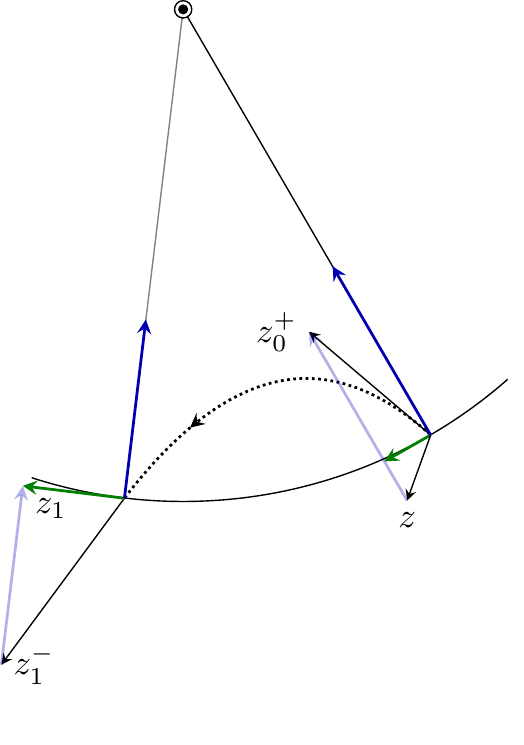}
	\caption[Pendulum]{
	An illustration of \Shake and \Rattle in the familiar case of the planar pendulum.
	The reader is encouraged to compare that situation with the general geometrical description on \autoref{fig_shake}.

	We start from a point $z$, which is on $\Man$, but not necessarily on $\Mp$, which means that the velocity is not assumed to be tangent to the constraint circle.
	The arrow in the direction of the rod represents a kick applied to the mass, which has the effect of changing its normal velocity, resulting in a point $z_0^+$ in the phase space.

	The next step is to use a symplectic integrator to simulate the unconstrained flow, until the mass reaches the constraint manifold, that is, until the mass is at distance one from the origin; in this case we use the exact, unconstrained solution, which is a parabola.
	One has to adjust the initial kick in order to reach the constraint manifold exactly after one time-step.

	The mass reaches the constraint manifold $\Man$ at the phase point $z_1^-$ with a non-tangential velocity, which means that $z_1^-$ does not belong to $\Mp$.
	The effect of \Rattle is now simply to correct the normal velocity by applying an appropriate kick in the rod direction in order to obtain a point $z_1 \in \Mp$.
	}
	\label{fig_pendulum}
\end{figure}


\paragraph{Well-posedness of \Shake and \Rattle} 
\label{par:well_posedness_of_shake_and_rattle}

The algebraic equations defining \Shake and \Rattle can be thought of as discretisations of the original equation~\eqref{eq:constrained_system}.
However, contrary to the continuous case, the discretised equations are \emph{not} intrinsic to~$\Man$.
Thus, well-posedness of \Shake and \Rattle depends on how~$\Man$ is embedded in~$\sympman$.

Let $\Tan\Man^\bot$ denote the orthogonal complement of $\Tan\Man$ with respect to the symplectic form~$\omegaP$, i.e., $u\in\Tan_x\Man^\bot$ if and only if $\omegaP(u,v) =0$ for all $v\in\Tan_x\Man$.

\begin{definition}\label{def:coisotropy}
	A submanifold $\Man$ of $\sympman$ is called \emph{coisotropic} if $\Tan\Man^\bot\subset \Tan\Man$.
\end{definition}
As is explained carefully in \autoref{sec:extrinsic_viewpoint}, the natural assumption in order for \Shake and \Rattle to be well-posed is the following, which is a completely extrinsic condition, i.e., it only has to do with how~$\Man$ is embedded in~$\sympman$.

\begin{assumption}[Coisotropy] \label{assump:coisotropy}
	$\Man$ is a coisotropic submanifold of $\sympman$.
\end{assumption}

\begin{example}
	For the setting in \autoref{ex:classical_setting_eq}, let $g_1,\ldots,g_\ncons$ be the components of the vector valued constraint function~$g$.
	Then \autoref{assump:coisotropy} means that
	\[
		\Poisson{g_i}{g_j}(z) = 0 , \quad \forall\, z\in\Man .
	\]
	An equivalent interpretation of \autoref{assump:coisotropy} is that none of the Lagrange multipliers in equation~\eqref{eq:constrained_system_coordinates_long} are resolved by differentiating the constraint condition once.
	From a \DAE point of view, the nondegeneracy and coisotropy assumptions together asserts that equation~\eqref{eq:constrained_system_coordinates_long} has index~3.
	An important particular case is obtained when $g$ does not depend on $p$.
	In that case, \autoref{assump:coisotropy} \emph{always holds} (see \autoref{sec:holonomic_case}).
	As shown in \autoref{sub:coisotropic_embedding}, \autoref{assump:coisotropy} also implies that $\fib{z}$ is parameterised by 
	\[
		(\lambda_1,\ldots,\lambda_\ncons)\longmapsto \exp\Big(\sum_{i=1}^\ncons \lambda_i X_{g_i}\Big)(z).
	\]
	In turn, this means that the geometric \Shake method is given by
	\begin{equation*}
		\Sh = \dflowh\circ\exp\Big(\sum_{i=1}^m \lambda_i X_{g_i}\Big)
	\end{equation*}
	where $\lambda_1,\ldots,\lambda_m$ are determined implicitly by the conditions~$g_i \circ \Sh = 0$.
	Likewise, the geometric \Rattle method is given by
	\begin{equation*}
		\Rh = \exp\Big(\sum_{i=1}^m \sigma_i X_{g_i}\Big)\circ \Sh
	\end{equation*}
	where $\sigma_1,\ldots,\sigma_m$ are determined implicitly by $(X_H \cdot \nabla g_i)\circ \Rh = 0$.
\end{example}

It is easy to find instances where the coisotropic and/or the nondegeneracy assumptions do not hold, and where the \Shake and \Rattle methods are not well-defined. 
For example, if we take as constraint $H = \text{const}$, then the nondegeneracy assumption does not hold, and it is easy to see that \Shake and \Rattle are not well-defined.
This is expected, since the result by \citet*{GeMa1988} asserts that it is not (in general) possible to construct symplectic and energy preserving methods. 
In \autoref{sec_example_high_index} we give further examples of failing assumptions.
Lastly, in \autoref{sec_example_hopf} we also give a numerical example of a Hamiltonian problem with mixed position and momentum constraints, where we use the geometric \Shake and \Rattle methods.


\paragraph{Main results} 
\label{par:main_results}

The main results in the paper can be summarised as follows.

\begin{enumerate}
	\item Under \autoref{assump:non_degeneracy}, the set $\Mp$ is a symplectic submanifold with symplectic form $\omegaMp=\inclMp^*\omegaM$, and equation~\eqref{eq:constrained_system} is well-posed for initial data in $\Mp$. 
	(\autoref{thm:MH_manifold})
	
	\item Under \autoref{assump:non_degeneracy} and \autoref{assump:coisotropy}, there exists an open set $O\subset\Man$, containing $\Mp$, such that the \Shake map $\Sh\colon O\to O$ is well defined and presymplectic, i.e., $\Sh^*\omegaM = \omegaM$.
	Further, it is convergent of order at least~1.
	(\autoref{thm:Mvariational_methods} and \autoref{pro:convergence})
	
	\item Under \autoref{assump:non_degeneracy} and \autoref{assump:coisotropy}, the \Rattle map $\Rh\colon \Mp\to\Mp$ is well defined and symplectic, i.e., $\Rh^*\omegaMp = \omegaMp$.
	Further, it is convergent of order at least~1.
	(\autoref{thm:Mvariational_methods} and \autoref{pro:convergence})
	
\end{enumerate}

\section{Hamiltonian Systems on Presymplectic Manifolds} 
\label{sec:presymplectic_dynamics}

In this section we investigate the geometric structures of equation~\eqref{eq:constrained_system} from the intrinsic viewpoint, i.e., without ``looking outside'' of~$\Man$.

In general, a \emph{presymplectic manifold} is a pair $(\Man,\omegaM)$, where $\Man$ is a smooth manifold, and $\omegaM$ is a closed 2--form on~$\Man$ called a \emph{presymplectic form}. 
The difference from a symplectic form is that $\omegaM$ need not be nondegenerate.
Thus, a symplectic manifold is a special case of a presymplectic manifold.
We review some geometric concepts of presymplectic manifolds that are essential in the remainder.
For a more thorough treatment, we refer to the book by \citet*{LiMa87}.

Given a function $\HM\in\mathcal{C}^\infty(\Man)$, equation~\eqref{eq:intrinsic_maineq} constitutes a Hamiltonian system on~$(\Man,\omegaM)$.
Since $\omegaM$ might be degenerate, this equation is not, in general, an ordinary differential equation, but instead a \DAE . 
We show in \autoref{sub:nondegeneracy_assumption} that under \autoref{assump:non_degeneracy} it is an index~1 \DAE on~$\Man$.
(In \autoref{sec:extrinsic_viewpoint} we take the complementary extrinsic viewpoint, and we show that under \autoref{assump:non_degeneracy} and \autoref{assump:coisotropy} equation~\eqref{eq:constrained_system} can be interpreted as an index~3 problem on~$\sympman$.)

\subsection{Foliation}\label{sub:foliation}

Throughout the paper we make the following ``blanket assumption'':
\begin{framed}
	\centering
	The dimension of the kernel distribution~$\ker\omegaM$ is constant.
\end{framed}
One important consequence is that the distribution $\ker\omegaM$ (now assumed to be \emph{regular}) is \emph{integrable} (cf.~\cite[Th.~25.2]{GuSt90}). 
That is, at each point $x\in\Man$ there is a submanifold $\fib{x}\subset \Man$ passing through~$x$ whose tangent spaces coincides with the distribution. 
The submanifolds $\fib{x}$ are called \emph{leaves}, and the collection $\Mq$ of all leaves is called a \emph{foliation}.
See \autoref{fig_shake} for an illustration of the foliation $\Mq$ of $\Man$.

\begin{remark}
	The foliation defines an equivalence class by $y\equiv x$ if $y\in\fib{x}$.
	We denote the set of all such equivalence classes by~$\Mq$. 
	The projection is given by $$\proj \colon \Man \ni x\to \fib{x}\in \Mq.$$
	The set $\Mq$ may or may not be a smooth manifold.
	When it is, the presymplectic form~$\omegaM$ descends to a symplectic form $\bar\omegaM$ on $\Mq$, and $\proj$ is a symplectic submersion.
	The projection map~$\proj$ being a submersion means that we have a \emph{fibration} of $\Man$.
	Locally, every foliation is a fibration, but not necessarily globally.
	Throughout the remainder of the paper, we use the word ``fibre'' instead of ``leaf'', although the fibration may only be local.
\end{remark}

A \emph{presymplectic mapping} is a mapping $\varphi \colon \Man\to\Man$ that preserves the presymplectic form $\omegaM$, i.e., for which
\[
	\omegaM(u,v) = \varphi^*\omegaM(u,v) \coloneqq \omegaM(\Tan\varphi \cdot u,\Tan\varphi \cdot v) \qquad
	\forall\; u,v\in \Tan_x\Man
	.
\]

There is a certain class of mappings that are trivial in the sense that they reduce to the identity mapping in the quotient manifold $\Mq$.

\begin{definition}
\label{def_trivially_presymplectic}
	A smooth mapping $φ \colon \Man\to\Man$ is called \emph{trivially presymplectic} if it preserves each fibre, i.e., if
	\begin{equation*}
	φ(x) \in \fib{x} \quad \text{for all $x \in \Man$} 
	.
	\end{equation*}
\end{definition}

The following result is clear.

\begin{proposition}\label{pro:trivially_presymplectic_maps}
	If $\varphi$ is trivially presymplectic, then it is presymplectic.
\end{proposition}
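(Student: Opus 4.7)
The plan is to exploit the fact that the presymplectic form $\omegaM$ is constant along the kernel leaves, so that it only ``sees'' the transverse directions to the foliation. Since a trivially presymplectic map by definition moves every point purely along its leaf, it should be invisible to $\omegaM$. Concretely, the argument is to observe that $\omegaM$ descends locally to the leaf space as a symplectic form $\bar\omegaM$ satisfying $\omegaM = \proj^*\bar\omegaM$, and then use $\proj\circ\varphi = \proj$ to conclude.

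First I would invoke the local structure of the regular integrable distribution $\ker\omegaM$: near any point $x\in\Man$ there is a neighbourhood $U$ on which the leaf projection $\proj\colon U\to\proj(U)\subset\Mq$ is a submersion whose fibres are exactly the local leaves. As noted in the remark preceding the proposition, on such a neighbourhood there is a unique symplectic form $\bar\omegaM$ on $\proj(U)$ with $\omegaM = \proj^*\bar\omegaM$. Second, the trivially presymplectic hypothesis $\varphi(x)\in\fib{x}$ reads exactly as $\proj\circ\varphi = \proj$ on $U$ (shrinking $U$ if necessary so that both $x$ and $\varphi(x)$ fall in the same local leaf chart, using continuity of $\varphi$). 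A one-line functoriality calculation then yields
\[
	\varphi^*\omegaM \;=\; \varphi^*\proj^*\bar\omegaM \;=\; (\proj\circ\varphi)^*\bar\omegaM \;=\; \proj^*\bar\omegaM \;=\; \omegaM,
\]
and since being presymplectic is a pointwise condition, this local identity is all that is needed.

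The only mildly subtle point is justifying the local descent $\omegaM = \proj^*\bar\omegaM$, which is the real ``content'' of the statement. This is the standard fact that a closed 2-form whose kernel agrees with the tangent spaces to the fibres of a submersion is \emph{basic} for that submersion: for any vector field $X$ tangent to the leaves one has $\interior{X}{\omegaM} = 0$, and combined with $\dd\omegaM = 0$ Cartan's formula gives $\LieD_X\omegaM = 0$, which together with the vanishing contraction is precisely the basicness condition. Nondegeneracy of the induced form $\bar\omegaM$ on the local quotient follows because $\ker\omegaM$ has been quotiented out. Once this descent is in hand, the proposition is essentially a tautology, which matches the authors' assertion that the result is clear.
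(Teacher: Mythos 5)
The paper offers no proof of this proposition (it is simply declared clear), so there is no argument of the authors' to compare against; what you have written is the standard justification and it is essentially correct. The descent step is sound: for any $X$ tangent to the leaves, $\interior{X}{\omegaM}=0$ together with $\dd\omegaM=0$ gives $\LieD_X\omegaM=0$ by Cartan's formula, so $\omegaM$ is basic for a local leaf projection and $\omegaM=\proj^*\bar\omegaM$ on a foliation chart; the functoriality computation is then immediate. The one imprecise point is the phrase ``shrinking $U$ if necessary so that both $x$ and $\varphi(x)$ fall in the same local leaf chart'': shrinking a neighbourhood of $x$ cannot capture $\varphi(x)$ if $\varphi$ displaces $x$ a long way along its leaf, and for a recurrent or dense leaf there need not exist any single chart in which $x$ and $\varphi(x)$ lie on the same plaque. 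The clean repair uses only ingredients you already have. Since $\varphi(x')\in\fib{x'}$ for all $x'$, the map $\varphi$ preserves each leaf, hence $\Tan_x\varphi$ maps $(\ker\omegaM)_x$ into $(\ker\omegaM)_{\varphi(x)}$; therefore $\varphi$ induces a well-defined germ of a smooth map between local transversals at $x$ and at $\varphi(x)$, and the transverse forms at the two ends are identified by the holonomy along a leafwise path joining them, an identification that preserves $\bar\omegaM$ precisely because $\LieD_X\omegaM=0$ for leafwise $X$. For the maps to which the paper actually applies the proposition --- the sliding map $\launch{\dflowh}$, the projection $\Proj$ of \autoref{cor:projection_onto_MH}, and the flows of the vector fields $f\,\Ham{\cfunc{\alpha}}$ --- the displacement along each fibre is small, so your local-chart version applies verbatim and the caveat is harmless.
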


\subsection{Hidden Constraints} 
\label{sub:hidden_constraints}

The fact that $\omegaM$ does not have full rank reflects that, in general, the possible solutions of \eqref{eq:intrinsic_maineq} do not fill the whole manifold $\Man$.
Indeed, if a curve~$\gamma(t)$ is a solution of~\eqref{eq:intrinsic_maineq}, then $\dd H(\gamma(t))$ must be in the set $\omegaM(\Tan_{\gamma(t)}\Man)$.
As already seen in~\autoref{sec:intro}, the set of points at which this is fulfilled defines the hidden constraint set~$\Mp\subset\Man$, given by~\eqref{eq_def_Mp}.

\begin{remark}
In general, this set is defined as the locus of $\ncons$ functions, where $\ncons$ is the dimension of $\fib{x}$.
However, if the differential of those functions are not independent at the locus points, the set $\Mp$ need not be a submanifold, and if it is a submanifold, it need not be of codimension $\ncons$.
For instance, we may have $\Mp  = \Man$ if  the Hamiltonian~$H$ is constant along each fibre~$\fib{x}$.
This is in particular the case if $\omegaM$ is nondegenerate.
\end{remark}

\begin{remark}
The subset $\Mp$ is, strictly speaking, not a set of \emph{hidden} constraints, but rather \emph{implicit} constraints as a consequence of \eqref{eq:intrinsic_maineq}.
\end{remark}




\subsection{Nondegeneracy Assumption} 
\label{sub:nondegeneracy_assumption}

In this section we show that the nondegeneracy assumption, \autoref{assump:non_degeneracy}, ensures that: (i)~$\Mp$ is a submanifold of $\Man$; (ii)~$\omegaMp=\inclMp^*\omegaM$ is a symplectic form; and (iii)~the initial value problem~\eqref{eq:intrinsic_maineq} is a Hamiltonian problem on~$(\Mp,\omegaMp)$.
As a consequence, problem~\eqref{eq:intrinsic_maineq} has unique solutions for initial data in~$\Mp$.

From a \DAE point of view, \autoref{assump:non_degeneracy} ensures that the \DAE~\eqref{eq:intrinsic_maineq} on $\Man$ has index~1.
As it turns out (see \autoref{sec:geometry_of_shake_and_rattle} below), the nondegeneracy assumption, together with \autoref{assump:coisotropy}, also asserts that the geometrically defined \Shake and \Rattle methods are well defined.

We start with the observation that $\Mp$ is the set of critical points of~$\Hkx$.

\begin{proposition}
\label{prop_Mp_critical}
For $x\in\Man$, let $\Hkx = \incl{\fib{x}}^*\HM$. 
Then
\begin{equation*}
\Mp = \big\{\, y \in \Man : \dd \Hkx(y) = 0 \,\big\}
\end{equation*}
\end{proposition}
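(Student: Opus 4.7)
The plan is to reduce the claim to a pointwise linear-algebra statement at each $y\in\fib{x}$, using the fact recalled in \autoref{sub:foliation} that $\Tan_y\fib{x}=\ker\omegaM_y$. Since $\Hkx=\incl{\fib{x}}^*\HM$, its differential at $y$ is nothing but the restriction of $\dd\HM(y)$ to the subspace $\Tan_y\fib{x}=\ker\omegaM_y$. Hence $\dd\Hkx(y)=0$ is equivalent to $\dd\HM(y)$ annihilating $\ker\omegaM_y$.

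The heart of the argument is then to show that, for the ``musical'' linear map $v\mapsto \interior{v}{\omegaM_y}$ from $\Tan_y\Man$ to $\Tan_y^*\Man$, the image $\omegaM(\Tan_y\Man)$ coincides with the annihilator of $\ker\omegaM_y$. One inclusion is immediate: if $\alpha=\interior{v}{\omegaM_y}$, then $\alpha(u)=\omegaM_y(v,u)=0$ for every $u\in\ker\omegaM_y$. For the reverse inclusion I would invoke a dimension count: the kernel of the musical map is tautologically $\ker\omegaM_y$, so its image has dimension $\dim\Tan_y\Man-\dim\ker\omegaM_y$, which coincides with the dimension of the annihilator of $\ker\omegaM_y$. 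Together with the inclusion just established, this forces equality.

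Combining the two steps, the condition $y\in\Mp$, i.e., $\dd\HM(y)\in\omegaM(\Tan_y\Man)$, is equivalent to $\dd\HM(y)$ annihilating $\ker\omegaM_y$, which in turn is equivalent to $\dd\Hkx(y)=0$. I do not see any substantial obstacle: once the definition \eqref{eq_def_Mp} of $\Mp$ and the identification $\Tan_y\fib{x}=\ker\omegaM_y$ are unwound, the proposition reduces to the standard linear fact that for a presymplectic form the image of the musical map equals the annihilator of its kernel.
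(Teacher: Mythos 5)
Your proof is correct and rests on the same identification the paper uses: $\dd\Hkx(y)$ is the restriction of $\dd\HM(y)$ to $\Tan_y\fib{x}=\ker\omegaM_y$, so membership of $\dd\HM(y)$ in $\omegaM(\Tan_y\Man)$ forces $\dd\Hkx(y)=0$. The one substantive difference is that the paper's proof only records this easy containment ($\Mp\cap\fib{x}$ consists of critical points) and leaves the converse implicit, whereas your dimension count --- the image of the musical map $v\mapsto\interior{v}{\omegaM_y}$ has dimension $\dim\Tan_y\Man-\dim\ker\omegaM_y$ and sits inside the annihilator of $\ker\omegaM_y$, hence equals it --- actually establishes the reverse inclusion and thus the stated equality of sets. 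So your argument is, if anything, more complete than the one printed; no gaps.
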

\begin{proof}
If $y\in\Mp\cap\fib{x}$ then $\dd\Hkx(y) = 0$ since $\dd H(y) \in \omegaM(\Tan_y\Man)$.
Thus, the set $\Mp\cap\fib{x}$ consists of critical points of the function $\Hkx$. 
\end{proof}



\begin{theorem}\label{thm:MH_manifold}
	Under \autoref{assump:non_degeneracy}, the following holds.
	\begin{enumerate}
		\item The set $\Mp$ is a submanifold of~$\Man$.
		\item 
		\label{item_span}
		At a point $x\in\Mp$ we have
		\begin{equation*}
		\Tan_x \Man = \Tan_x \Mp \oplus \ker ν
		.
		\end{equation*}
		In particular, the presymplectic form $\omegaM$ restricted to $\Mp$ is a symplectic form.
		Thus, $\Mp$ is a symplectic manifold.
		\item Equation~\eqref{eq:intrinsic_maineq} has unique solutions for initial data in~$\Mp$. 
		These solutions are given by the solutions of the Hamiltonian problem on
		$\Mp$ obtained by restricting~$H$ to~$\Mp$.
	\end{enumerate}
\end{theorem}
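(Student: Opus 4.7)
The plan is to work in local coordinates adapted to the foliation. By the blanket assumption $\ker\omegaM$ is a regular distribution, so Frobenius produces coordinates $(y^1,\ldots,y^m,z^1,\ldots,z^k)$ around any $x_0\in\Man$ in which the fibres $\fib{x}$ are the level sets $\{z=\mathrm{const}\}$. In such coordinates $\Hkx$ is the function $y\mapsto H(y,z_0)$ with $z_0$ frozen, so by \autoref{prop_Mp_critical} the set $\Mp$ is cut out locally by the $m$ equations $\partial H/\partial y^i = 0$. The Jacobian of this system with respect to $y$ is the block $\partial^2 H/\partial y^i\partial y^j$, which at a critical point coincides with the Hessian of $\Hkx$ and is therefore invertible by \autoref{assump:non_degeneracy}. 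The implicit function theorem then exhibits $\Mp$ locally as a graph $y=y(z)$, giving part (1).

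For part (2), transversality $\Tan_x\Mp\cap\ker\omegaM_x=\{0\}$ at $x\in\Mp$ is immediate: any vector in the intersection would be tangent to a curve lying inside $\Mp\cap\fib{x}$, but nondegenerate critical points of $\Hkx$ are isolated on the fibre. The dimension count $\dim\Tan_x\Mp + \dim\ker\omegaM_x = (\dim\Man - m) + m = \dim\Tan_x\Man$ then upgrades trivial intersection to the direct sum $\Tan_x\Man = \Tan_x\Mp\oplus\ker\omegaM_x$. Closedness of $\omegaMp = \inclMp^*\omegaM$ is automatic; for nondegeneracy, take $u\in\Tan_x\Mp$ with $\omegaMp(u,v)=0$ for all $v\in\Tan_x\Mp$, decompose an arbitrary $w\in\Tan_x\Man$ as $w=v+k$ with $k\in\ker\omegaM_x$, and note that $\omegaM(u,w)=\omegaM(u,v)+\omegaM(u,k)=0$, which forces $u\in\ker\omegaM_x\cap\Tan_x\Mp=\{0\}$.

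For part (3), the newly symplectic manifold $(\Mp,\omegaMp)$ admits the Hamiltonian flow $\gamma(t)$ of $\incl{\Mp}^*\HM$ by classical ODE theory. To see that this flow solves \eqref{eq:intrinsic_maineq}, test the difference $\interior{\dot\gamma}{\omegaM}-\dd\HM$ against $w\in\Tan_{\gamma(t)}\Man$ split as $w=v+k$ with $v\in\Tan_{\gamma(t)}\Mp$ and $k\in\ker\omegaM_{\gamma(t)}$: on the $v$-component the identity reduces to the Hamiltonian equation on $\Mp$, while on the $k$-component both terms vanish since $k\in\ker\omegaM$ and $\dd\Hkx(\gamma(t))=0$ by \autoref{prop_Mp_critical}. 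Conversely, any solution starting in $\Mp$ has $\dd\HM$ annihilating $\ker\omegaM$ along the curve, which keeps it in $\Mp$; once restricted to $\Mp$, the velocity is determined up to a kernel component by $\omegaMp$-nondegeneracy, and the requirement $\dot\gamma\in\Tan\Mp$ fixes this component via the same Hessian invertibility exploited in part~(1) -- this is the index-1 reduction of the underlying \DAE.

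The main technical obstacle is the interplay in part (3): the presymplectic equation on $\Man$ genuinely underdetermines $\dot\gamma$ along kernel directions, so uniqueness is not formal. The nondegeneracy assumption must be invoked twice -- once through the implicit function theorem to promote $\Mp$ to a smooth submanifold, and once through differentiation of the hidden constraint to pin down the otherwise free kernel component of $\dot\gamma$.
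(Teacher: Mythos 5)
Your proposal follows essentially the same route as the paper's proof: the paper picks a frame $X_1,\dots,X_\ncons$ of $\ker\nu$ and shows that the matrix $m_{ij}=X_i^\alpha\,\partial^2H/\partial x^\alpha\partial x^\beta\,X_j^\beta$ (the fibre Hessian of $\Hkx$) is invertible under \autoref{assump:non_degeneracy}; in your Frobenius coordinates $X_i=\partial/\partial y^i$ and this is exactly your Jacobian block $\partial^2H/\partial y^i\partial y^j$. Parts (2) and (3) then proceed by the same direct-sum decomposition and the same testing of the equation against $w=v+k$ with $k\in\ker\nu$, and your remarks on nondegeneracy of $\omegaMp$ and on uniqueness in part (3) correctly fill in steps the paper treats tersely.

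One inference in your part (2) is not valid as written. From the isolatedness of the nondegenerate critical points of $\Hkx$ you conclude that a nonzero vector in $\Tan_x\Mp\cap\ker\nu_x$ ``would be tangent to a curve lying inside $\Mp\cap\fib{x}$''. A vector lying in the intersection of two tangent spaces need not be tangent to any curve in the intersection of the two submanifolds: the line $\{y=0\}$ and the parabola $\{y=x^2\}$ in $\RR^2$ meet only at the origin yet share a tangent line there, so a zero-dimensional intersection does not force trivially intersecting tangent spaces. The transversality you need is, however, already contained in your part (1): since the implicit function theorem exhibits $\Mp$ locally as a graph $y=y(z)$ over the coordinates transverse to the fibres, $\Tan_x\Mp$ contains no nonzero vector with vanishing $z$-component, hence no nonzero element of $\ker\nu_x=\Tan_x\fib{x}$. (Equivalently, as in the paper, a vector $u=u^jX_j\in\Tan_x\Mp\cap\ker\nu_x$ satisfies $m_{ij}u^j=0$ and so vanishes by invertibility of the fibre Hessian.) With that one step repaired, the rest of your argument is correct and coincides in substance with the paper's.
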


\begin{proof}
Each statement is proved, respectively, as follows.
\begin{enumerate}
\item
	Let $X_1,\ldots,X_\ncons$ be linearly independent vector fields on $M$ that span the distribution $\ker\omegaM$.
	Define the functions $\rho_i(x) \coloneqq \bracket{\dd H(x)}{X_i(x)}$. 
	Then, using \autoref{prop_Mp_critical}, $$\Mp = \big\{\, x\in\Man: \rho_i(x)=0, \quad i=1,\ldots,\ncons \,\big\}.$$
	$\Mp$ is a submanifold if $\dd \rho_1(x),\ldots,\dd\rho_k(x)$ are linearly independent for every~$x\in\Mp$.
	An equivalent conditions is that the matrix
	\[
		m_{ij} \coloneqq \bracket{\dd\rho_i(x)}{X_j(x)},\quad i,j=1,\ldots,\ncons
	\]
	be invertible for every~$x\in\Mp$. 
	Using that $\rho_i(x)=0$, we get in local coordinates $x_j$ that
	\[
		m_{ij} = X_i^\alpha(x) \frac{\partial^2 H(x)}{\partial x^\alpha\partial x^\beta} X_j^\beta(x)
	\]
	where $X_i = X_i^\alpha \frac{\partial}{\partial x^\alpha}$.
	Since $X_1(x),\ldots,X_k(x)$ is a linearly independent basis of $(\ker\omegaM)_x=\Tan_x\fib{x}$, \autoref{assump:non_degeneracy} means exactly that this matrix is invertible, which thus proves the first assertion.
	
\item
	For the second assertion, using that the codimension of $\Mp$ is $\ncons$, it suffices to prove that $(\ker\omegaM)_x\cap\Tan_x\Mp =  0$ for every $x\in\Mp$.
	Let $u\in\Tan_x\Mp$.
	Then $\bracket{\dd\rho_i(x)}{u} = 0$.
	Next, assume that $U\in(\ker\omegaM)_x$.
	Then~$U$ can be expanded as $U = u^i X_i(x)$.
	We now get
	\[
		0 = \sum_{i=1}^k u^i \bracket{\dd\rho_j}{X_i(x)} = m_{ij}u^i .
	\]
	Under \autoref{assump:non_degeneracy} we know that $m_{ij}$ is invertible, which implies that $U=0$.
	Thus, $(\ker\omegaM)_x\cap\Tan_x\Mp =  0$, which proves that $\nu$ restricted to $\Mp$ is nondegenerate.
	
\item
	For the final assertion, it is enough to show that $\gamma(t)$ is a solution to equation~\eqref{eq:intrinsic_maineq} if and only if it is a solution to the Hamiltonian problem
	\[
		\nu(\dot\gamma(t),U) = \bracket{\dd H(\gamma(t))}{U}, \quad\forall\, U\in\Tan\Mp
	\]
	on the symplectic manifold~$\Mp$ (for which existence and uniqueness follows from standard \ODE theory).
	As we have seen, under \autoref{assump:non_degeneracy} every $X\in\Tan_{\gamma(t)}\Man$ can be written as $X=U+W$ with $U\in\Tan\Mp$ and $W\in\ker\omegaM$.
	Now,
	\[
		\omegaM(\dot\gamma(t),X) = \omegaM(\dot\gamma(t),U)\qquad\text{and}\qquad
		\bracket{\dd H(\gamma(t))}{X} = \bracket{\dd H(\gamma(t))}{U}
	\]
	where the first and second equality follows, respectively, since $W\in\ker\omegaM$ and 
	\[
		\bracket{\dd H(\gamma(t))}{W} = 
		\Bracket{\dd H(\gamma(t))}{\sum_i w^i X_i(\gamma(t))} = 
		\sum_i w^i\rho_i(\gamma(t))=0.
	\]
\end{enumerate}
This ends the proof.
\end{proof}

\begin{remark}
If the prescribed initial condition does not lie in the set $\Mp $, there cannot be any solution curve passing through this point.
On the other hand, if $\Mp $ is a submanifold, and if it intersects the fibres of $\Mq$ cleanly, i.e., if the dimension of the intersection is constant, and if that dimension is larger than zero, then the equation may have infinitely many solutions.
This is what happens if $H$ is constant on the fibres of $\Mq$.
\end{remark}

The following result will be useful in \autoref{sec:geometry_of_shake_and_rattle}, when we analyse \Shake and \Rattle.

\begin{corollary}\label{cor:projection_onto_MH}
	Under \autoref{assump:non_degeneracy}, there exists an open set~$O\subset\Man$ containing~$\Mp$ such that the equation $y\in\fib{x}\cap\Mp$ has a unique solution for every~$x\in O$.
	The corresponding trivially presymplectic projection map $\Proj \colon O\to\Mp$, defined by $\Proj(x)=y$, is a submersion.
\end{corollary}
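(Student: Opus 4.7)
The plan is to reduce the existence and uniqueness of $y \in \fib{x} \cap \Mp$ to an application of the implicit function theorem, using the transversality $\Tan_x \Man = \Tan_x \Mp \oplus \ker \omegaM$ established in part~\ref{item_span} of \autoref{thm:MH_manifold}. Concretely, fix a point $x_0 \in \Mp$ and borrow, from the proof of that theorem, a local frame $X_1,\ldots,X_\ncons$ for $\ker \omegaM$ together with the defining functions $\rho_i(x) = \langle \dd H(x), X_i(x)\rangle$, whose common zero locus is $\Mp$. Since each $X_i$ is tangent to the leaves, the composition of their time-$t_i$ flows gives, for $t = (t_1,\ldots,t_\ncons)$ small, a smooth parameterisation $\psi \colon (x,t) \mapsto \psi_t(x)$ with $\psi_t(x) \in \fib{x}$ for all nearby $x$.

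Now define $F(x,t) = (\rho_1(\psi_t(x)),\ldots,\rho_\ncons(\psi_t(x)))$. One has $F(x_0,0)=0$, and the $t$-Jacobian at $(x_0,0)$ is precisely the matrix $m_{ij} = \langle \dd \rho_i(x_0), X_j(x_0)\rangle$, which is invertible by \autoref{assump:non_degeneracy}. The implicit function theorem then yields an open neighbourhood $U_{x_0}\subset\Man$ of $x_0$ and a unique smooth map $t\colon U_{x_0}\to\R^\ncons$ with $t(x_0)=0$ and $F(x,t(x))=0$. Setting $\Proj(x) := \psi_{t(x)}(x)$ gives a smooth map $U_{x_0}\to\Mp$ satisfying $\Proj(x)\in\fib{x}$, hence trivially presymplectic in the sense of \autoref{def_trivially_presymplectic}. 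Uniqueness of $t(x)$ within the chart ensures that the local maps constructed around different base points agree on overlaps, so taking $O$ to be the union of the $U_{x_0}$ (shrunk if necessary so each fibre piece in $O$ meets $\Mp$ at exactly one point) yields a well-defined smooth $\Proj\colon O\to\Mp$ with $\Proj|_{\Mp}=\mathrm{id}$.

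Finally, since $\Proj$ restricts to the identity on $\Mp$, the tangent map $\Tan_x \Proj$ is surjective onto $\Tan_x\Mp$ at every $x\in\Mp$; by openness of the set where a smooth map has surjective differential, after possibly shrinking $O$, $\Proj$ is a submersion on all of $O$. The main subtlety is the global patching: although the IFT gives uniqueness locally along each fibre, a fibre could in principle intersect $\Mp$ at several distant points, so $O$ must be chosen thin in the ``fibre direction''. This is a standard tubular-neighbourhood style argument, made available by the splitting $\Tan_x\Man = \Tan_x\Mp \oplus \ker\omegaM$ along $\Mp$, and I expect it to be the only step requiring genuine care.
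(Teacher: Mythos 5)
Your proposal is correct and follows essentially the same route as the paper: the paper's own proof is a one-line appeal to the splitting $\Tan_x\Man=\Tan_x\Mp\oplus\ker\omegaM$ from \autoref{thm:MH_manifold}, and your argument is precisely the detailed implicit-function-theorem implementation of that transversality (via the invertibility of $m_{ij}$, which is the same matrix used there). Your explicit handling of the patching and of fibres meeting $\Mp$ in several points is a welcome elaboration of what the paper leaves implicit, but it is not a different method.
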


\begin{proof}
	This follows from \autoref{thm:MH_manifold} \autoref{item_span}, namely that for $x\in \Mp$,
\(		\Tan_x \Man = \Tan_x \Mp \oplus \ker ν\).
\end{proof}

\section{Coisotropic Constraints} 
\label{sec:extrinsic_viewpoint}

In this section we study the geometry of problem~\eqref{eq:constrained_system} from the extrinsic viewpoint.
That is, we study properties of~$\Man$ as a submanifold of the symplectic manifold~$(\sympman,\omegaP)$.
Notice that $\omegaM := \inclM^*\omegaP$ is a presymplectic form on~$\Man$, since $\dd\omegaM = \dd\inclM^*\omegaP = \inclM^*\dd\omegaP = 0$.
Thus, any submanifold of a symplectic manifold is automatically a presymplectic manifold.


\subsection{Lagrange Multipliers}

Typically, a constraint manifold is defined in terms of a number of constraint functions.
To this extent, let $\cons$ be a vector space of dimension~$\ncons$, and denote by $\cons^*$ its dual.
Let $\mom \colon \sympman\to\cons^*$ be a smooth function such that the constraint submanifold $\Man$ is given by 
\begin{equation}
\label{eqdefMG}
\Man = \mom\inv(0) = \big\{\, z\in \sympman : \mom(z) = 0 \,\big\}
.
\end{equation}
If $0$ is a regular value for $\mom$, i.e., if $\Tan \mom(z)$ has full rank for all $z \in \sympman$ such that $\mom(z) = 0$, then $\Man$ is indeed a regular submanifold of $\sympman$.
The dimension $\ncons$ of $\cons$ is the number of constraints, i.e., the codimension of $\Man$.

The problem \eqref{eq:constrained_system} may now be reformulated as finding a smooth curve
\[
	t\mapsto \big(z(t),\Lambda(t)\big)\in\sympman\times\cons
\] 
such that
\begin{subequations} \label{eqconslag}
\begin{equation}
\begin{split}
ω ( \dot{z} ) &= \dd H(z) + \dd\! \cfunc{Λ}(z) 
, \\
0 &= \mom (z)
.
\end{split}
\end{equation}
Here, the notation $\cfunc{\Lambda}$ means the smooth function $z\mapsto \bracket{\mom(z)}{\Lambda}$, depending on the parameter~$\Lambda$.
The equation can equivalently be written as
\begin{equation}
\label{eqconsdirac}
\begin{split}
\dot{z} &= \Ham{H}(z) + \Ham{\cfunc{Λ}}(z) 
 \\
0 &= \mom (z)
.
\end{split}
\end{equation}
\end{subequations}

We sometimes single out a basis $\{ e_i \}_{i = 1, \ldots , \ncons}$ of $\cons$ and define the functions $g_i$ by
\begin{equation}
\label{eq_gi}
g_i(z) \coloneqq \bracket{\mom(z)}{e_i} 
.
\end{equation}
Notice that in the case $\sympman = \RR^{2d}$, equation~\eqref{eqconslag} coincides with equation~\eqref{eq:constrained_system_coordinates_long} in \autoref{ex:classical_setting_eq} above, with $\lambda = (\lambda_1,\ldots,\lambda_\ncons)$ being the coordinate vector of~$Λ$, i.e., $Λ = ∑_{i=1}^m λ_i e_i$.

The system~\eqref{eqconslag} is again a \DAE.
Under \autoref{assump:non_degeneracy}, it follows from \autoref{thm:MH_manifold} above that this \DAE has unique solutions for initial data in~$\Mp$. 
From a \DAE point of view, \autoref{assump:non_degeneracy} asserts that system~\eqref{eqconslag} has index~3.


\subsection{Coisotropy Assumption} 
\label{sub:coisotropic_embedding}

Due to the solvability result imposed by \autoref{assump:non_degeneracy}, the Lagrange multipliers may be resolved as functions of $z$, which turns equation~\eqref{eqconslag} into
\begin{equation}\label{eq:diffeq_on_Mp}
	\dot z = X_H(z) + \sum_{i=1}^m \lambda_i(z)X_{g_i}(z) =: X(z).
\end{equation}
Notice that $X(z)$ is only defined for $z\in\Mp$ and also that $X(\Mp)\in \Tan\Mp$, so $X(z)$ defines an \ODE on the hidden constraint manifold~$\Mp$. 
From \autoref{thm:MH_manifold} it follows that its flow is symplectic.
However, the individual vector fields $\lambda_i(z)X_{g_i}(z)$ are \emph{not} Hamiltonian vector fields on $\sympman$ (assuming that $\lambda_i(z)$ is defined also outside of~$\Mp$).
In this section we present an assumption on the embedding $\inclM \colon \Man\to\sympman$ which ensures that vector fields of the form $f(z)X_{g_i}(z)$ are trivially presymplectic vector fields on $\Man$.
As we will see in \autoref{sec:geometry_of_shake_and_rattle}, this is essential in order to ensure presymplecticity and symplecticity of \Shake and \Rattle.

Recall from \autoref{def:coisotropy} that~$\Man$ is a \emph{coisotropic} submanifold of~$\sympman$ if $\TMperp \subset \Tan\Man$.
Also recall \autoref{assump:coisotropy} above (the coisotropy assumption), which states that~$\Man$ is a coisotropic submanifold.
We continue with some consequences of \autoref{assump:coisotropy}, which are later used in the geometric analysis of \Shake and \Rattle .


\begin{remark}
	It is straightforward to verify that $\Man$ being coisotropic is equivalent to $\TMperp$ being \emph{isotropic}, i.e., such that $\omegaP$ restricted to $\TMperp$ is zero.
\end{remark}

\begin{remark}
A coisotropic submanifold is such that  the symplectic form becomes as degenerate as possible (given a fixed number of constraints) when restricted on the submanifold.
More precisely, a coisotropic submanifold is such that the dimension of the distribution $\ker\omegaM$, i.e., dimension of the fibres of $\Mq$, is equal to the number of constraints~$\ncons$.
\end{remark}

%

\begin{remark}
\label{rk:coisotropic_embedding}
	From a theoretical point of view, \autoref{assump:coisotropy} is not a restriction on the presymplectic manifold $\Man$, since every presymplectic submanifold may be coisotropically embedded in a symplectic manifold~\cite{Go1982}.
\end{remark}

\begin{remark}
\label{rk:coisotropy_poisson_commute}
In practice as shown in \autoref{prop_coisotropy}, a sufficient condition for the manifold $\Man$ defined by the equations $g_i(z) = 0$ for $1≤i≤\ncons$ to be coisotropic is simply that
\begin{equation*}
\Poisson{g_i}{g_j}(z) = 0, \qquad i,j=1,\ldots,\ncons,\quad \forall\, z\in\Man
.
\end{equation*}
In particular, if the manifold $\Man$ is defined by one constraint, i.e. if $m=1$, then it is automatically a coisotropic submanifold.
\end{remark}

The following result gives alternative characterisations of coisotropic submanifolds.

\begin{proposition}
	\label{prop_coisotropy}
	Suppose that $\Man$ is a submanifold of $\sympman$.
	Then the following conditions are equivalent.
	\begin{enumerate}
		\item
			$\Man$ is a coisotropic submanifold, i.e., $\TMperp \subset \Tan \Man$.
		\item
			$\ker \omegaM = \Tan\Man\sorth$
	\end{enumerate}
	Further, if $\Man$ is defined implicitly by~\eqref{eqdefMG}, then the conditions are also equivalent to
	\begin{enumerate}
		\item[3.]
			For any $α,β\in\cons$, the functions $\cfunc{α}$ and $\cfunc{β}$ are in involution on $\Man$, i.e.,
			\begin{equation*}
				\Poisson{\cfunc{α}}{\cfunc{β}}(z) =  0 \qquad \text{for}
				\quad z \in \Man
				.
			\end{equation*}
		\item[4.]
			For any $\alpha\in \cons$, the Hamiltonian vector field $\Ham{\cfunc{α}}$ is tangent to $\Man$.
	\end{enumerate}
\end{proposition}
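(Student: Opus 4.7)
My plan is to prove $(1)\Leftrightarrow(2)$ directly from the definition of $\omegaM$, and then, under the implicit formulation, to use the symplectic isomorphism to identify $\Tan_z\Man\sorth$ with $\Span\{\Ham{\cfunc{\alpha}}(z):\alpha\in\cons\}$. Once that identification is in hand, $(1)\Leftrightarrow(4)\Leftrightarrow(3)$ fall out immediately.

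First I would establish the general identity
\[
\ker\omegaM_x = \Tan_x\Man\cap\Tan_x\Man\sorth.
\]
Indeed, for $v\in\Tan_x\Man$ we have $\omegaM(v,w)=\omegaP(v,w)$ for every $w\in\Tan_x\Man$, so $v\in\ker\omegaM$ iff $\omegaP(v,w)=0$ for all $w\in\Tan_x\Man$, i.e.\ iff $v\in\Tan_x\Man\sorth$. Since $\ker\omegaM\subset\Tan\Man$ always, this intersection coincides with $\Tan_x\Man\sorth$ exactly when $\Tan_x\Man\sorth\subset\Tan_x\Man$, which is precisely $(1)\Leftrightarrow(2)$.

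For the implicit description $\Man=\mom\inv(0)$, the regular-value hypothesis gives $\Tan_z\Man=\bigcap_{\alpha\in\cons}\ker\dd\cfunc{\alpha}(z)$, and hence the annihilator of $\Tan_z\Man$ in $\Tan_z^{*}\sympman$ is $\Span\{\dd\cfunc{\alpha}(z):\alpha\in\cons\}$. The musical map $u\mapsto\interior{u}{\omegaP}$ is a linear isomorphism $\Tan_z\sympman\to\Tan_z^{*}\sympman$ that sends $\Tan_z\Man\sorth$ bijectively onto this annihilator, and carries $\Ham{\cfunc{\alpha}}(z)$ to $\dd\cfunc{\alpha}(z)$; therefore
\[
\Tan_z\Man\sorth=\Span\{\Ham{\cfunc{\alpha}}(z):\alpha\in\cons\}.
\]
Condition (4), that every $\Ham{\cfunc{\alpha}}$ be tangent to $\Man$, is then literally $\Tan_z\Man\sorth\subset\Tan_z\Man$, which is (1).

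Finally, $(3)\Leftrightarrow(4)$ follows from the standard Poisson bracket identity
\[
\Poisson{\cfunc{\alpha}}{\cfunc{\beta}}(z)=\bracket{\dd\cfunc{\beta}(z)}{\Ham{\cfunc{\alpha}}(z)}
\]
combined with the characterisation $\Tan_z\Man=\bigcap_{\beta\in\cons}\ker\dd\cfunc{\beta}(z)$: vanishing of all brackets on $\Man$ is precisely the statement that each $\Ham{\cfunc{\alpha}}(z)$ is annihilated by every $\dd\cfunc{\beta}(z)$, hence lies in $\Tan_z\Man$. The only nontrivial ingredient is the spanning identity for $\Tan_z\Man\sorth$ via the musical isomorphism; the remaining work is definition-chasing.
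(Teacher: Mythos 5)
Your proposal is correct and follows essentially the same route as the paper: the identity $\ker\omegaM = \Tan\Man\cap\Tan\Man\sorth$ for $(1)\Leftrightarrow(2)$, the identification $\Tan_z\Man\sorth=\Span\{\Ham{\cfunc{\alpha}}(z):\alpha\in\cons\}$ (the paper's \autoref{prop_orbit}, which you derive via the musical isomorphism and the annihilator rather than by computing the orthogonal of the span, a cosmetic difference), and the Poisson bracket identity for $(3)\Leftrightarrow(4)$.
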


In order to prove this, let us start with a lemma concerning the span of the Hamiltonian vector fields $\Ham{\cfunc{α}}$.

\begin{lemma}
	\label{prop_orbit}
	Define the distribution 
	\[
	\Orbit = \Span \big\{\, \Ham{\cfunc{α}}(\Man): α \in \cons \,\big\}.
	\]
	Then $\Tan\Man\sorth = \Orbit$.
\end{lemma}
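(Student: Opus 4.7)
The plan is to prove both inclusions $\mathcal{O} \subset \Tan\Man\sorth$ and $\Tan\Man\sorth \subset \mathcal{O}$, the first by a direct calculation using the defining property of the Hamiltonian vector field, and the second by a dimension count using the regularity of $\mom$.

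First I would fix $z \in \Man$ and $\alpha \in \cons$, and show that $\Ham{\cfunc{\alpha}}(z) \in \Tan_z\Man\sorth$. For any $v \in \Tan_z\Man$, by definition of the Hamiltonian vector field,
\begin{equation*}
\omegaP\bigl(\Ham{\cfunc{\alpha}}(z), v\bigr) = \dd\cfunc{\alpha}(z) \cdot v = \bracket{\Tan\mom(z)\cdot v}{\alpha}.
\end{equation*}
Since $v \in \Tan_z\Man = \ker \Tan\mom(z)$, the right-hand side vanishes, so $\Ham{\cfunc{\alpha}}(z) \in \Tan_z\Man\sorth$. This gives $\Orbit_z \subset \Tan_z\Man\sorth$ pointwise, hence $\Orbit \subset \Tan\Man\sorth$.

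For the reverse inclusion, I would argue by dimensions. Since $\omegaP$ is nondegenerate on $\sympman$, the symplectic orthogonal satisfies $\dim \Tan_z\Man\sorth = \codim \Man = \ncons$. It therefore suffices to show $\dim \Orbit_z = \ncons$, i.e., that the linear map
\begin{equation*}
\cons \ni \alpha \longmapsto \Ham{\cfunc{\alpha}}(z) \in \Tan_z\sympman
\end{equation*}
is injective. If $\Ham{\cfunc{\alpha}}(z) = 0$, then nondegeneracy of $\omegaP$ forces $\dd\cfunc{\alpha}(z) = 0$, which means $\bracket{\Tan\mom(z)\cdot v}{\alpha} = 0$ for every $v \in \Tan_z\sympman$. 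Because $0$ is a regular value of $\mom$, the map $\Tan\mom(z)$ is surjective onto $\cons^*$, hence the pairing $\bracket{\cdot}{\alpha}$ annihilates all of $\cons^*$, forcing $\alpha = 0$. Thus the map is injective and $\dim \Orbit_z = \ncons = \dim \Tan_z\Man\sorth$, which combined with the first inclusion yields equality.

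No single step is a real obstacle; the only subtlety is making sure one invokes the regular-value hypothesis at the right moment so that $\Tan\mom(z)$ is actually surjective, which is exactly what allows the injectivity argument above to go through. The result is then purely linear-algebraic at each point, and globality follows because it holds fibrewise.
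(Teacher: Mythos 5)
Your proof is correct, and it closes the argument differently from the paper. The first half is the same computation in both cases: the identity $\omegaP(\Ham{\cfunc{\alpha}},v)=\bracket{\dd\cfunc{\alpha}}{v}$ together with $\Tan_z\Man=\ker\Tan\mom(z)$ gives $\Orbit\subset\Tan\Man\sorth$. For the reverse inclusion the paper does not argue by dimension at all: it shows the single equivalence $X\in\Tan\Man\iff\omegaP(\Ham{\cfunc{\alpha}},X)=0\ \forall\alpha\in\cons$, i.e.\ $\Orbit\sorth=\Tan\Man$, and then concludes $\Tan\Man\sorth=\Orbit$ from the involution property $(W\sorth)\sorth=W$ of the symplectic orthogonal. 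You instead count: $\dim\Tan_z\Man\sorth=\codim\Man=\ncons$, and $\alpha\mapsto\Ham{\cfunc{\alpha}}(z)$ is injective because $\omegaP$ is nondegenerate and $\Tan\mom(z)$ is surjective. Both routes rest on the same two hypotheses (nondegeneracy of $\omegaP$ and $0$ being a regular value of $\mom$); the paper's version is shorter but hides the regular-value hypothesis inside the backward implication of its first equivalence, while yours makes explicit exactly where surjectivity of $\Tan\mom(z)$ enters and avoids appealing to biduality of $\sorth$. Either is a complete proof.
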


\begin{proof}
	We show that $\Orbit\sorth = \Tan\Man$, which is equivalent to the claim.
	\begin{equation*}
	X \in \Tan\Man 
	\iff 
	\bracket{\dd\cfunc{α}}{X} = 0 \quad \forall α\in\cons 
	\iff 
	ω(\Ham{\cfunc{α}}, X) = 0 \quad \forall α \in \cons
	\end{equation*}
\end{proof}

\begin{proof}[Proof of \autoref{prop_coisotropy}]
	We do it step by step.
	\begin{enumerate}
		\item[1$\leftrightarrow$2]
		In general,
		\begin{equation*}
			\ker\omegaM = \Tan\Man \cap \Tan\Man\sorth
			,
		\end{equation*}
		so $\ker\omegaM = \Tan\Man\sorth \iff \Tan\Man\sorth \subset \Tan\Man$, and that is the definition of coisotropicity of~$\Man$.
		\item[1$\leftrightarrow$3]
		First, for $x\in\Man$
		\begin{equation*}
		\Poisson{g_i}{g_j}(x) = 0 \iff ω(\Ham{g_i}(x),\Ham{g_j}(x)) = 0
		,
		\end{equation*}
		so the functions $g_i$ are in involution on $\Man$ if and only if $\Orbit$ (defined in \autoref{prop_orbit}) is isotropic, which is equivalent to~$\Man$ being coisotropic.
		\item[3$\leftrightarrow$4]
		Finally, it suffices to observe that for a point $x\in\Man$,
		\begin{equation*}
			\begin{split}
				\Ham{\cfunc{α}}(x) \in \Tan_x \Man &\iff \bracket{\dd \cfunc{β}}{\Ham{\cfunc{α}}}(x) = 0 \quad \forall β \in \cons \\
				& \iff \Poisson{\cfunc{α}}{\cfunc{β}}(x) = 0 \quad \forall β\in\cons
				.
			\end{split}
		\end{equation*}
	\end{enumerate}
\end{proof}

The following results follows directly from \autoref{prop_orbit} and \autoref{prop_coisotropy}.

\begin{corollary}\label{cor:fibre_parametrisation}
	Let $x\in\Man$. Then, under \autoref{assump:coisotropy}, the map 
	\[
		\cons \ni \alpha \mapsto \exp(\Ham{\cfunc{\alpha}})(x)\in \fib{x}
	\]
	is a local diffeomorphism.
	The fibre $\fib{x}$ is thus locally parametrised by $\cons$.
\end{corollary}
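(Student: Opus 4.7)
The plan is to apply the inverse function theorem to the smooth map
\[
	\Phi \colon \cons \to \Man, \qquad \Phi(\alpha) \coloneqq \exp(\Ham{\cfunc{\alpha}})(x),
\]
after verifying that it lands in $\fib{x}$ and that its differential at the origin is an isomorphism onto $\Tan_x\fib{x}$.

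First I would check that $\Phi(\alpha)\in\fib{x}$. By part~4 of \autoref{prop_coisotropy}, under \autoref{assump:coisotropy} the vector field $\Ham{\cfunc{\alpha}}$ is tangent to $\Man$, so the flow stays in $\Man$ and $\Phi(\alpha)$ is well defined. Moreover, at every point $y\in\Man$ we have $\Ham{\cfunc{\alpha}}(y)\in\Orbit_y \subset \Tan_y\Man\sorth$, and by part~2 of \autoref{prop_coisotropy}, $\Tan_y\Man\sorth = \ker\omegaM_y$. Hence $t\mapsto \exp(t\,\Ham{\cfunc{\alpha}})(x)$ is an integral curve of a vector field tangent to the kernel distribution, which is precisely the tangent distribution to the foliation; thus the entire curve, and in particular $\Phi(\alpha)$, lies in the leaf $\fib{x}$.

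Next I would compute the tangent map at $0\in\cons$, which is the linear map
\[
	\Tan_0\Phi \colon \cons\to \Tan_x\fib{x}, \qquad \Tan_0\Phi\cdot \beta = \Ham{\cfunc{\beta}}(x).
\]
By \autoref{prop_orbit}, the vectors $\{\Ham{\cfunc{\beta}}(x) : \beta\in\cons\}$ span $\Tan_x\Man\sorth$, which under \autoref{assump:coisotropy} equals $\ker\omegaM_x = \Tan_x\fib{x}$. Both $\cons$ and $\Tan_x\fib{x}$ have dimension~$\ncons$ (the dimension of the fibre equals the number of constraints in the coisotropic case, as noted in the remark preceding \autoref{prop_coisotropy}), so a surjection between spaces of equal finite dimension is an isomorphism. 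The inverse function theorem then yields a local diffeomorphism between a neighbourhood of $0$ in $\cons$ and a neighbourhood of $x$ in $\fib{x}$, which is the claim.

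There is no genuine obstacle: the two nontrivial ingredients---tangency of $\Ham{\cfunc{\alpha}}$ to $\Man$ and the identification $\ker\omegaM = \Tan\Man\sorth = \Orbit$---are exactly the content of \autoref{prop_orbit} and \autoref{prop_coisotropy}, and the dimension count is built into the definition of coisotropicity. The only point that requires a little care is confirming that integral curves of kernel vector fields remain inside a single leaf, but this is immediate from the integrability of the (regular) distribution $\ker\omegaM$ assumed throughout the paper.
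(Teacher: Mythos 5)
Your argument is correct and follows exactly the route the paper intends: the paper gives no written proof, stating only that the corollary ``follows directly from'' \autoref{prop_orbit} and \autoref{prop_coisotropy}, and your proposal simply spells out those two ingredients (tangency of $\Ham{\cfunc{\alpha}}$ and the identification $\Orbit = \Tan\Man\sorth = \ker\omegaM$) together with the dimension count and the inverse function theorem. No discrepancy to report.
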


\begin{corollary}\label{cor:gi_presymplectic}
	Let $f\in\mathcal{C}^\infty(\sympman)$ and $\alpha\in \cons $.
	Under \autoref{assump:coisotropy} the vector field 
	\[
		X(z) \coloneqq f(z)\Ham{\cfunc{\alpha}}(z)
	\]	
	is tangent to $\Man$, and presymplectic when restricted to~$\Man$.
\end{corollary}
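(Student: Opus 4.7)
The plan is to deduce both claims directly from the structural results already established in this section, in particular \autoref{prop_orbit} and \autoref{prop_coisotropy}, without any new computation. The tangency claim is immediate, and the presymplectic claim will follow once we observe that $X\restr{\Man}$ takes values in $\ker\omegaM$, at which point a standard Cartan-calculus identity finishes the job.

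First I would handle tangency. By \autoref{prop_coisotropy}, item~4, Assumption~\ref{assump:coisotropy} is equivalent to the statement that for every $\alpha \in \cons$ the Hamiltonian vector field $\Ham{\cfunc{\alpha}}$ is tangent to $\Man$. Since tangent spaces are linear, multiplication by the scalar function $f$ preserves this property pointwise: for $z \in \Man$,
\begin{equation*}
	X(z) = f(z)\,\Ham{\cfunc{\alpha}}(z) \in \Tan_z \Man.
\end{equation*}
Thus $X$ restricts to a genuine vector field on $\Man$.

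Next I would argue that $X\restr{\Man}$ actually lies in the smaller distribution $\ker\omegaM$. By \autoref{prop_orbit}, $\Ham{\cfunc{\alpha}}(z) \in \Tan_z\Man\sorth$ for every $z \in \Man$; and by \autoref{prop_coisotropy}, item~2, Assumption~\ref{assump:coisotropy} gives $\Tan\Man\sorth = \ker\omegaM$. Combining these, $\Ham{\cfunc{\alpha}}(z) \in \ker\omegaM_z$, and multiplying by $f(z)$ preserves membership in this linear subspace, so $X(z) \in \ker\omegaM_z$ for all $z \in \Man$.

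Finally, to conclude presymplecticity I would compute the Lie derivative using Cartan's magic formula:
\begin{equation*}
	\LieD_{X\restr{\Man}} \omegaM = \dd\bigl(\interior{X\restr{\Man}}{\omegaM}\bigr) + \interior{X\restr{\Man}}{\dd\omegaM}.
\end{equation*}
The first term vanishes because $X\restr{\Man}$ lies in $\ker\omegaM$, so $\interior{X\restr{\Man}}{\omegaM} = 0$; the second vanishes because $\omegaM = \inclM^*\omegaP$ is closed. Hence the flow of $X\restr{\Man}$ preserves $\omegaM$, which is exactly the definition of a presymplectic vector field on $\Man$. There is no real obstacle here: the work was done in \autoref{prop_orbit} and \autoref{prop_coisotropy}, and the corollary is essentially a repackaging. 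One mild subtlety to be careful about is that $X$ as defined on $\sympman$ need not be Hamiltonian or even tangent to $\Man$ away from the constraint; all the statements above are about the restriction $X\restr{\Man}$, which is well defined thanks to the tangency step.
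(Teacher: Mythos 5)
Your proof is correct and fills in exactly the argument the paper leaves implicit: tangency from \autoref{prop_coisotropy} (item~4), membership of $X\restr{\Man}$ in $\ker\omegaM$ from \autoref{prop_orbit} together with \autoref{prop_coisotropy} (item~2), and presymplecticity from there. Your final step via Cartan's formula is a slightly more computational substitute for the paper's route through trivially presymplectic maps (the flow of $X\restr{\Man}$ preserves the fibres, so \autoref{pro:trivially_presymplectic_maps} applies), but the two are interchangeable here.
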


\subsection{Relation between $\protect\Ham{H}$ and $\Mp$}

As a subset of $\sympman$, the hidden constraint set $\Mp$ is given by the points on $\Man$ where the Hamiltonian vector field $X_H$ is tangential to $\Man$.
Let $\Proj$ denote the projection onto~$\Mp$ defined in \autoref{cor:projection_onto_MH}.
\begin{proposition}
\label{prop:MH_extrinsic}
Under \autoref{assump:coisotropy}, the hidden constraint set $\Mp$ is
\begin{equation*}
	\Mp = \big\{\, z \in \Man :  X_H(z)\in\Tan_z\Man \,\big\} .
\end{equation*}
Moreover, the differential equation~\eqref{eq:diffeq_on_Mp} on $\Mp$ can be written
\begin{equation*}
\dot{z} = \Tan_z\Proj\cdot \Ham{H}(z)
.
\end{equation*}
\end{proposition}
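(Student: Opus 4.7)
\emph{Proof plan.} The argument has two parts. For the first identity, I would unpack both sides using the defining relation $\omegaM = \inclM^*\omegaP$ and the Hamiltonian characterisation $\interior{\Ham{H}}{\omegaP} = \dd H$. For $z\in\Man$ and $u\in\Tan_z\Man$, we have $\dd\HM(z)\cdot u = \omegaP(\Ham{H}(z),u)$, while $\omegaM(v,u) = \omegaP(v,u)$ for any $v\in \Tan_z\Man$. Hence $\dd\HM(z)\in \omegaM(\Tan_z\Man)$ is equivalent to the existence of $v\in\Tan_z\Man$ such that $\Ham{H}(z)-v \in \Tan_z\Man\sorth$. The easy direction ($\Ham{H}(z)\in\Tan_z\Man$ implies $z\in\Mp$) is immediate by taking $v=\Ham{H}(z)$ and needs no assumption. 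The other direction is exactly where \autoref{assump:coisotropy} enters: since $\Tan_z\Man\sorth\subset\Tan_z\Man$, the vector $\Ham{H}(z)=v+(\Ham{H}(z)-v)$ is a sum of two elements of $\Tan_z\Man$, so $\Ham{H}(z)\in\Tan_z\Man$.

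For the second identity, I would write the \ODE~\eqref{eq:diffeq_on_Mp} in the form $\dot z = \Ham{H}(z) + \sum_i \lambda_i(z)\Ham{g_i}(z)$ and decompose $\Ham{H}(z)$ using the direct sum of \autoref{thm:MH_manifold}\ref{item_span}, namely $\Tan_z\Man = \Tan_z\Mp \oplus \ker\omegaM$. Write $\Ham{H}(z) = u + w$ with $u\in\Tan_z\Mp$ and $w\in\ker\omegaM_z$. By \autoref{prop_orbit} combined with \autoref{prop_coisotropy}, each $\Ham{g_i}(z) \in \Tan_z\Man\sorth = \ker\omegaM_z$, so the correction term $\sum_i\lambda_i(z)\Ham{g_i}(z)$ lies in $\ker\omegaM_z$. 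Since the sum $u + w + \sum_i \lambda_i(z)\Ham{g_i}(z)$ must lie in $\Tan_z\Mp$ (this is the whole point of the Lagrange multipliers), the directness of the decomposition forces $w + \sum_i \lambda_i(z)\Ham{g_i}(z) = 0$, and therefore $\dot z = u$.

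It remains to identify $u$ with $\Tan_z\Proj \cdot \Ham{H}(z)$. Since $\Proj$ is a trivially presymplectic projection onto $\Mp$ that is the identity on $\Mp$ (\autoref{cor:projection_onto_MH}), its tangent map at $z\in\Mp$ is the projection along $\ker\omegaM_z$ onto $\Tan_z\Mp$ with respect to the splitting $\Tan_z\Man = \Tan_z\Mp\oplus \ker\omegaM_z$. Consequently $\Tan_z\Proj\cdot\Ham{H}(z) = u$, completing the proof.

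The only genuinely non-routine step is the ``hard'' direction of the first identity, where coisotropy is unavoidable: without it, one can produce points where a $v\in\Tan_z\Man$ realises $\dd\HM(z)$ as $\omegaM(v,\cdot)$, yet $\Ham{H}(z)$ itself protrudes out of $\Man$.
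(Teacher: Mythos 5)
Your proposal is correct and follows essentially the same route as the paper: the first part is the paper's argument verbatim (realise $\dd\HM(z)$ as $\omegaP(v,\cdot)$ on $\Tan_z\Man$, deduce $\Ham{H}(z)-v\in\Tan_z\Man\sorth$, and use coisotropy to place $\Ham{H}(z)$ in $\Tan_z\Man$), and the second part reaches the same key fact $\dot z-\Ham{H}(z)\in\ker\omegaM_z$ — you via the explicit multiplier form $\sum_i\lambda_i\Ham{g_i}\in\Tan\Man\sorth$, the paper via the weak formulation — before invoking the splitting $\Tan_z\Man=\Tan_z\Mp\oplus\ker\omegaM_z$ and the description of $\Tan_z\Proj$. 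Your version is somewhat more explicit than the paper's (which leaves the identification of $\Tan_z\Proj$ with the projection along the kernel implicit), and like the paper it tacitly uses \autoref{assump:non_degeneracy} in the second part, since both the splitting and $\Proj$ exist only under that assumption.
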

\begin{proof}
\begin{enumerate}
\item
If $z \in \Mp$, then by definition there exists $Y \in \Tan_z\Man$ such that
\begin{equation*}
\bracket{\dd H}{X} = ω(Y,X) \qquad \forall X \in \Tan_z \Man
.
\end{equation*}
Since $ω(\Ham{H}) = \dd H$, that is equivalent to
\begin{equation*}
\Ham{H}-Y \in \Tan\Man\sorth
.
\end{equation*}
Noticing that \autoref{assump:coisotropy} means that $\Tan\Man\sorth \subset \Tan\Man$, and using $Y\in \Tan\Man$ yields $\Ham{H} \in \Tan_z\Man$.
\item
The differential equation on $\Mp$ is such that
\begin{equation*}
ω(\dot{z},X) = \bracket{\dd H}{X} \qquad \forall X \in \Tan_z \Man
\end{equation*}
so we obtain
\begin{equation*}
\dot{z} - Y \in \Tan_z\Man\sorth
,
\end{equation*}
and $\dot{z} = \Tan_z\Proj\cdot\Ham{H}(z)$.
\end{enumerate}
\end{proof}

\begin{remark}
\label{rk_Mp_alter}
There are now several ways to compute $\Mp$.
First, without any assumption, one can use the definition \eqref{eq_def_Mp}, and its immediate consequence \autoref{prop_Mp_critical}.
Under \autoref{assump:coisotropy}, one can also use \autoref{prop:MH_extrinsic}.
If the constraint manifold $\Man$ is defined as in \eqref{eqdefMG}, a further useful description of $\Mp$ is
\begin{equation*}
\Mp = \big\{\, z\in \Man : \Poisson{g_i}{H}(z) = 0 \quad i=1,\ldots,\ncons  \,\big\}
.
\end{equation*}
This follows from the observation that 
\begin{equation}
\Ham{H} \in \Tan_z \Man \iff \bracket{\dd g_i}{\Ham{H}} = 0,\quad i=1,\ldots,\ncons
\end{equation}
and $\bracket{\dd g_i}{\Ham{H}} = \Poisson{g_i}{H}$.
\end{remark}

Based on \autoref{thm:MH_manifold}, \autoref{prop_coisotropy} and \autoref{prop_orbit}, we can say much more on the behaviour of $\Ham{H}$ in a neighbourhood of $\Mp$.
Indeed, we have the following result, which is a key ingredient in the well-posedness of \Shake and \Rattle, as will be explained in~\autoref{sec:geometry_of_shake_and_rattle}.

\begin{lemma}
\label{lma_coisotropy}
Let $y \in \Mp$ and define the function
\begin{equation*}
	\begin{split}
		F \colon \fib{y} &\longrightarrow \cons^* \\
		x &\longmapsto \dd\mom(x) \cdot X_H(x)
		.
	\end{split}
\end{equation*}
Then, under \autoref{assump:non_degeneracy} and \autoref{assump:coisotropy}, the differential of~$F$ at~$y$, i.e., the linear mapping
\[
	\dd F(y):\Tan_y\fib{y}\longrightarrow \cons^* ,
\]
is invertible.
\end{lemma}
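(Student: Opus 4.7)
The plan is to pick a basis of $T_y \fib{y}$ using the coisotropic parametrisation from \autoref{cor:fibre_parametrisation}, read off the matrix of $\dd F(y)$ in that basis, and identify it with the Hessian of $H_{\fib{y}}$ at the critical point $y$, which is then nondegenerate by \autoref{assump:non_degeneracy}.

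Concretely, fix a basis $e_1,\ldots,e_\ncons$ of $\cons$ and let $g_i = \bracket{\mom}{e_i}$ as in \eqref{eq_gi}. By \autoref{cor:fibre_parametrisation}, under \autoref{assump:coisotropy} the map $\alpha \mapsto \exp(\Ham{\cfunc{\alpha}})(y)$ locally parametrises $\fib{y}$, so the vectors $W_j \coloneqq \Ham{g_j}(y) \in \Tan_y \fib{y}$ form a basis. Writing $F$ componentwise as $\bracket{F(x)}{e_i} = \dd g_i(x)\cdot \Ham{H}(x) = \Poisson{H}{g_i}(x)$, the matrix of $\dd F(y)$ with respect to $\{W_j\}$ and the dual basis $\{e_i^*\}$ has entries
\[
A_{ij} \coloneqq \bracket{\dd F(y)\cdot W_j}{e_i} = \Ham{g_j}\cdot \Poisson{H}{g_i}\,(y) = \Poisson{g_j}{\Poisson{H}{g_i}}(y).
\]

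The next step is to identify $A$ with (up to a sign) the Hessian of $H_{\fib{y}}=\incl{\fib{y}}^*\HM$ at $y$. By \autoref{prop_Mp_critical}, $y$ is a critical point of $H_{\fib{y}}$, so the Hessian is an intrinsic symmetric bilinear form on $\Tan_y\fib{y}$, and in the parametrisation $\alpha\mapsto \exp(\Ham{\cfunc{\alpha}})(y)$ its $(i,j)$-entry is $\Ham{g_i}\cdot(\Ham{g_j}\cdot H)(y) = \Poisson{g_i}{\Poisson{g_j}{H}}(y)$. The key computation is the Jacobi identity
\[
\Poisson{g_j}{\Poisson{H}{g_i}} + \Poisson{H}{\Poisson{g_i}{g_j}} + \Poisson{g_i}{\Poisson{g_j}{H}} = 0,
\]
evaluated at $y$. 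Under \autoref{assump:coisotropy}, \autoref{prop_coisotropy} gives $\Poisson{g_i}{g_j}\equiv 0$ on $\Man$, and since $y\in\Mp$ we have $\Ham{H}(y)\in\Tan_y\Man$ by \autoref{prop:MH_extrinsic}; hence the middle term $\Poisson{H}{\Poisson{g_i}{g_j}}(y)=\dd\Poisson{g_i}{g_j}(y)\cdot \Ham{H}(y)=0$, because the differential of a function vanishing on $\Man$ annihilates vectors in $\Tan_y\Man$. This yields $A_{ij} = -\Poisson{g_i}{\Poisson{g_j}{H}}(y)$, which is minus the Hessian matrix; as a bonus it shows the matrix is symmetric, confirming consistency with the intrinsic Hessian.

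Finally, \autoref{assump:non_degeneracy} says precisely that this Hessian is nondegenerate at the critical point $y\in \fib{y}\cap\Mp$, so $A$ is invertible and therefore so is $\dd F(y)$. I expect the main obstacle to be bookkeeping: making sure that the basis $W_j$ really generates $\Tan_y\fib{y}$ (needing coisotropy in order for $\Ham{g_j}$ to be tangent to $\Man$ and fill out the kernel of $\omegaM$), and that the vanishing of the Jacobi middle term holds because \emph{both} assumptions are simultaneously in force — coisotropy controls $\Poisson{g_i}{g_j}$, and membership $y\in\Mp$ (coming from nondegeneracy's role in defining $\Mp$ as a manifold via \autoref{thm:MH_manifold}) controls the tangency of $\Ham{H}(y)$.
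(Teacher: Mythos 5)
Your proof is correct and takes essentially the same route as the paper's: coisotropy gives $\Ham{g_1}(y),\dots,\Ham{g_\ncons}(y)$ as a basis of $\Tan_y\fib{y}$, the matrix of $\dd F(y)$ in that basis is an iterated Poisson bracket which, because $y$ is a critical point of $H_{\fib{y}}$, equals (up to sign and ordering) the Hessian of $H_{\fib{y}}$, and \autoref{assump:non_degeneracy} makes that Hessian invertible. The only real difference is your Jacobi-identity step, which merely swaps the roles of $g_i$ and $g_j$ and is not needed since the iterated bracket already gives the intrinsic Hessian at a critical point in either order (though it is a nice consistency check of symmetry); the paper instead appeals directly to the coordinate computation from the proof of \autoref{thm:MH_manifold}.
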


\begin{proof}
	In terms of the previously introduced basis $\{e_i\}_{i=1,\ldots,\ncons}$, the function~$F$ is given by
	\[
		F(x) = \sum_{i=1}^\ncons \bracket{\dd g_i(x)}{\Ham{H}(x)} e_i
			= -\sum_{i=1}^\ncons \bracket{\dd H(x)}{\Ham{g_i}(x)} e_i
	\]
	Under \autoref{assump:coisotropy}, it follows from \autoref{prop_coisotropy} and \autoref{prop_orbit} that $\Ham{g_1}(y),\ldots,\Ham{g_\ncons}(y)$ is a basis for $\Tan_y\fib{y}$.
	Relative to this basis, and the basis $\{e_i\}_{i=1,\ldots,\ncons}$ of $\cons$, the Jacobian matrix of $\dd F(y)$ is given by
	\[
		m_{ij} \coloneqq \bracket{\dd \bracket{\dd H(y)}{\Ham{g_i}(y)}}{\Ham{g_j}(y)} = \Poisson{\Poisson{H}{g_i}}{g_j}(y)
	\]
	Define $\rho_i(x)\coloneqq\bracket{\dd H(x)}{\Ham{g_i}(x)}$. 
	Then $m_{ij} = \bracket{\dd\rho_i(y)}{\Ham{g_j}(y)}$.
	Since $y\in\Mp$ we have $\rho_i(y)=0$.
	Now, under \autoref{assump:non_degeneracy} and the exact same argument as in the proof of \autoref{thm:MH_manifold}, it follows that $m_{ij}$ is invertible. 
	This concludes the proof.
\end{proof}



\section{Geometry of \protect\Shake and \protect\Rattle} 
\label{sec:geometry_of_shake_and_rattle}


\begin{figure}
	\centering
	\includegraphics{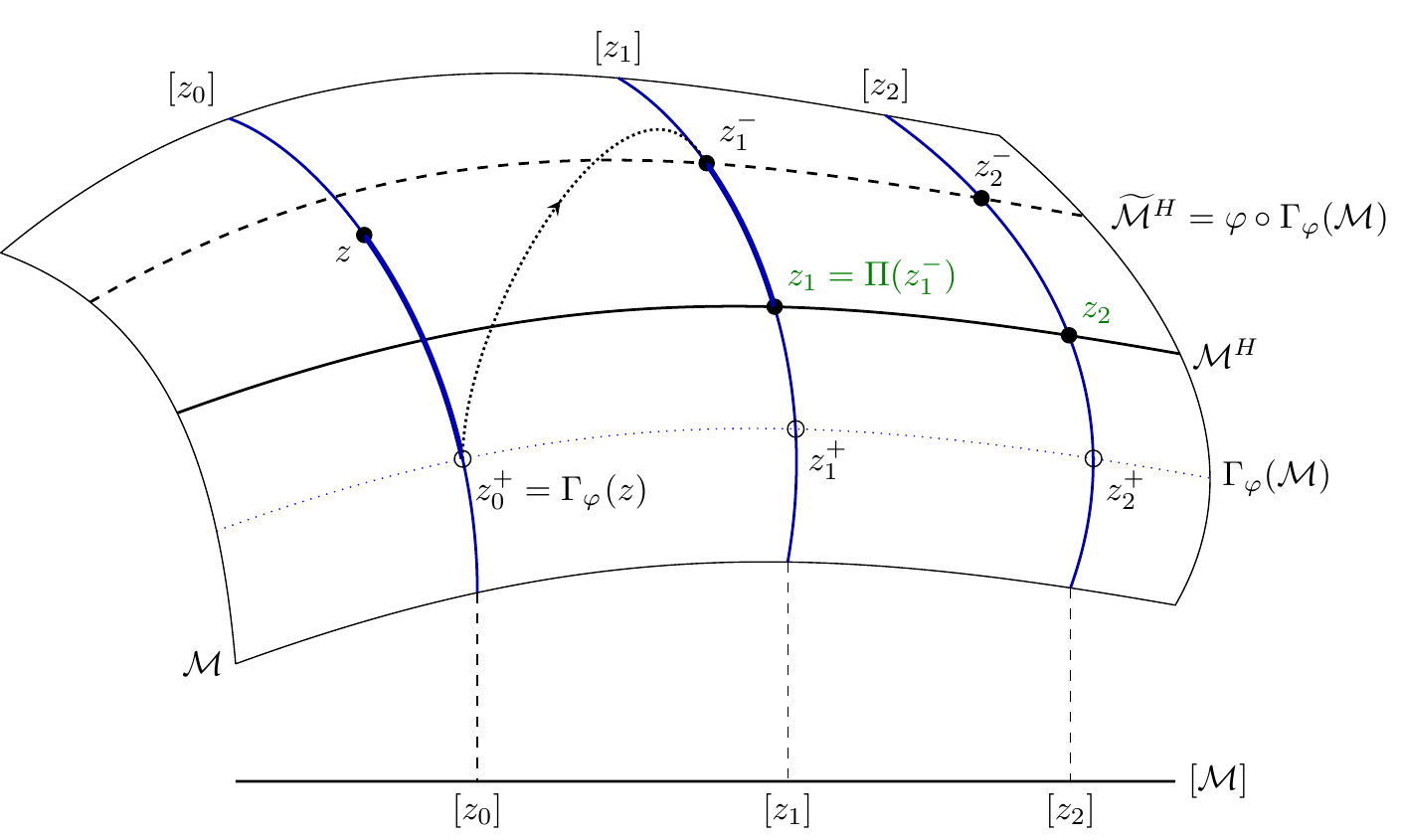}
	\caption[Manifold]{
	An illustration of the \Shake and \Rattle method with underlying symplectic method $\dflow[]$.
	See also \autoref{fig_pendulum} for an equivalent illustration in the familiar case of the pendulum.
	The first point $z$ must lie in the vicinity of $\Mp$. 
	The subsequent points $z_1^-,z_2^-,\ldots$ produced by \Shake will all lie in the modified hidden constraint manifold $\Mpd$.
	The points $z_1,z_2,\ldots$ produced by \Rattle will lie on $\Mp$.
	Notice how the points $z_k^-$, $z_k$ and $z_k^+$ always lie on the same fibre.
	Moreover, the precise location of a point on the fibre (for instance $z$ on the figure) is irrelevant for both \Shake and \Rattle.
	}
	\label{fig_shake}
\end{figure}

Geometrically, the basic principle of \Shake, defined in \autoref{def:geometric_shake}, with $\dflowh$ as an underlying method, can be described as follows.
For some initial data $z\in\Man$, slide along the fibre with $z^+\in\fib{z}$ such that $z^- = \dflowh(z^+)$ ``lands'' again on the submanifold~$\Man$.
The \Rattle method, defined in \autoref{def:geometric_rattle}, is then a post-processed version of \Shake, which is described geometrically as follows.
For some initial data $z_0\in\Mp$, take one step with \Shake landing on $z_1^- \in\Man$, then slide along the fibre $\fib{z_1^-}$ to end up on~$z_1\in\Mp\cap\fib{z_1^-}$.
This process is visualised in \autoref{fig_shake}.

If we assume that the \Shake map $\Sh\colon O\to O$ is well defined for some open subset $O\subset\Man$ containing~$\Mp$, then we may define a sliding map~$\launch{\dflowh} = \dflowh^{-1}\circ \Sh$.
Consequently, it follows from \autoref{def:geometric_shake} that the sliding map $O\ni z\mapsto \launch{\dflowh}(z) \in O$ is defined implicitly by the equation
\begin{equation}\label{eq:slidingdef}
	\dflowh(z^+)\in\Man, \quad z^+\in\fib{z}\cap O, \quad \launch{\dflowh}(z) = z^+ .
\end{equation}
Notice that: (i) $\launch{\dflowh}$ is fibre preserving, i.e., trivially presymplectic, and (ii) $\launch{\dflowh}$ is a projection, i.e., $\launch{\dflowh}\circ\launch{\dflowh}=\launch{\dflowh}$. 
Since $\Sh = \dflowh\circ\launch{\dflowh}$ it follows that \Shake, if it is well-defined, is a \emph{fibre mapping}, i.e., it maps fibres to fibres.
It is, in fact, a little bit more than that, since it maps the whole fibre $\fib{z}\cap O$ to the same point~$\Sh(z)$.
Hence, when using \Shake it is not important where on the initial fibre $\fib{z}$ one starts (as long as it is close enough to $\Mp$ so that \Shake is well-defined).
Furthermore, regardless of where on the fibre one starts, after one step \Shake remains on the \emph{modified} hidden constraint set, given by
\begin{equation*}
	\Mpd \coloneqq \dflowh\circ \launch{\dflowh}(O).
\end{equation*}
Since $\launch{\dflowh}$ is a projection, $\Mpd$ is strictly smaller than~$O$. 
If \Shake is well-defined,~$\Mpd$ is in fact a symplectic submanifold of~$\Man$ (see \autoref{prop_Mpd_symplectic} below).

Let $\Proj$ be the projection on $\Mp$ given in \autoref{cor:projection_onto_MH}.
Assume that $\Proj$ is well-defined on~$O$.
Then \Rattle is given by $\Rh = \Proj\circ \Sh$.
Notice that \Shake and \Rattle define exactly the same fibre mapping.
In particular,
\begin{equation}
\label{eq:shakeequalrattle}
\Proj \circ (\Sh)^k  = \Proj\circ (\Rh)^k
.
\end{equation}
Thus, \Rattle is only a cosmetic improvement of \Shake, and has no influence on the numerical scheme except at the last step.

We now give explicit conditions under which \Shake and \Rattle are well defined and can be computed.
More precisely:
\begin{enumerate}
	\item When is a method $\dflowh$ such that the corresponding \Shake and \Rattle methods are well defined?
	\item How can we parameterise $\launch{\dflowh}$ (so that $\Sh$ is computable)?
	\item Will \Shake and \Rattle converge to the solution of equation~\eqref{eq:constrained_system} as $h\to 0$~?
	\item Are $\Sh$ and $\Rh$ presymplectic as mappings $O\to O$ ?
	\item Is $\Rh$ symplectic as a mapping $\Mp \to \Mp$, and $\Sh$ symplectic as a mapping $\Mpd\to\Mpd$ ?
	\item Can $\Sh$ or $\Rh$ be reversible?
\end{enumerate}
These questions are addressed in the remainder of this section.

\subsection{Well-Posedness}

In order for \Shake and \Rattle to be well defined, we need the ``sliding process'' to have a locally unique solution.
Whether so or not depends on the map $\dflowh$. 


\begin{theorem}
\label{thm:Mvariational_methods}
	Suppose that \autoref{assump:non_degeneracy} and \autoref{assump:coisotropy} hold.
	Consider a (smooth) method~$\dflowh$, consistent with $\dot z = X_H(z)$.
Then for $h$ small enough and for $z\in\Man$ in a neighbourhood of $\Mp$ the equation
\begin{equation}
\label{eq:launch}
		\mom \big(\dflow(z^+)\big) = 0, \quad z^+\in \fib{z}
\end{equation}
	has a unique solution.
\end{theorem}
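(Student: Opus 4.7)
The plan is to parametrise the fibre $\fib{z}$ using \autoref{cor:fibre_parametrisation}, turn~\eqref{eq:launch} into an equation $G(h,\alpha,z)=0$ with $\alpha\in\cons$, and apply the implicit function theorem. The subtlety is that $G(0,\alpha,z)$ vanishes identically for $z\in\Man$, so IFT does not apply directly at $h=0$; the resolution is to extract a factor of $h$ by Hadamard's lemma and then check invertibility of the Jacobian of the rescaled equation using \autoref{lma_coisotropy}.

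\textbf{Setup.} Write $z^+=\Phi_\alpha(z)$ where $\Phi_\alpha\coloneqq\exp(\Ham{\cfunc{\alpha}})$; by \autoref{cor:fibre_parametrisation} this is a local diffeomorphism from a neighbourhood of $0\in\cons$ onto a neighbourhood of $z$ in $\fib{z}$, so~\eqref{eq:launch} is equivalent to finding $\alpha$ near $0$ solving
\[
	G(h,\alpha,z)\coloneqq \mom\bigl(\dflowh(\Phi_\alpha(z))\bigr)=0.
\]
Since by \autoref{assump:coisotropy} the vector field $\Ham{\cfunc{\alpha}}$ is tangent to $\Man$ (\autoref{prop_coisotropy}), the flow $\Phi_\alpha$ preserves $\Man$, hence $G(0,\alpha,z)=\mom(\Phi_\alpha(z))=0$ for all $\alpha$ and all $z\in\Man$. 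The equation is therefore degenerate at $h=0$, which is the main obstacle.

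\textbf{Rescaling and IFT.} Because $G(0,\alpha,z)\equiv 0$, Hadamard's lemma gives a smooth $\tilde G$ with $G(h,\alpha,z)=h\,\tilde G(h,\alpha,z)$ and
\[
	\tilde G(0,\alpha,z)=\partial_h G(0,\alpha,z)=\dd\mom\bigl(\Phi_\alpha(z)\bigr)\cdot X_H\bigl(\Phi_\alpha(z)\bigr),
\]
using consistency $\partial_h\dflowh|_{h=0}=X_H$. For $h>0$ the equation $G=0$ is equivalent to $\tilde G=0$. Fix $y\in\Mp$ and evaluate at $(h,\alpha,z)=(0,0,y)$: since $y\in\Mp$ means $X_H(y)\in\Tan_y\Man=\ker\dd\mom(y)$, we get $\tilde G(0,0,y)=0$. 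Differentiating in $\alpha$ at this point yields, for $\beta\in\cons$,
\[
	\partial_\alpha\tilde G(0,0,y)\cdot\beta \;=\; \dd F(y)\cdot \Tan_\alpha\Phi_\alpha(y)|_{\alpha=0}\cdot\beta \;=\; \dd F(y)\cdot \Ham{\cfunc{\beta}}(y),
\]
where $F$ is the function introduced in \autoref{lma_coisotropy}. This is the composition of the isomorphism $\cons\to\Tan_y\fib{y}$, $\beta\mapsto \Ham{\cfunc{\beta}}(y)$ (from \autoref{cor:fibre_parametrisation}), with the isomorphism $\dd F(y)\colon\Tan_y\fib{y}\to\cons^*$ (from \autoref{lma_coisotropy}, which uses precisely \autoref{assump:non_degeneracy} together with \autoref{assump:coisotropy}). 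Hence $\partial_\alpha\tilde G(0,0,y)$ is invertible.

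\textbf{Conclusion.} The implicit function theorem applied to $\tilde G$ at each $y\in\Mp$ produces a neighbourhood of $(0,y)$ in $[0,\infty)\times\Man$ and a unique smooth $\alpha=\alpha(h,z)$ near $0$ with $\tilde G(h,\alpha(h,z),z)=0$; by compactness/tubular-neighbourhood arguments these local neighbourhoods combine into a neighbourhood of $\Mp$ in $\Man$ together with a threshold $h_0>0$ on which $\alpha$ is uniquely and smoothly defined. For $h\in(0,h_0)$ this solves $G(h,\alpha,z)=0$ and hence~\eqref{eq:launch}, and uniqueness within the local fibre parametrisation translates to uniqueness of $z^+\in\fib{z}$ near $z$.
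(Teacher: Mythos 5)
Your proposal is correct and follows essentially the same route as the paper: the Hadamard-lemma rescaling $G=h\tilde G$ is exactly the paper's integral-form definition of $F_h$ (which equals $\mom(\dflowh(\cdot))/h$ for $h\neq 0$ and $\dd\mom\cdot X_H$ at $h=0$ by consistency), and the key invertibility of the linearisation along the fibre is obtained from \autoref{lma_coisotropy} in both arguments. The only cosmetic difference is that you pull back to $\cons$ via the exponential parametrisation and use the implicit function theorem in $(h,\alpha)$, whereas the paper works directly on $\fib{z}$ with the inverse function theorem for each small $h$.
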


\newcommand*\momcomp[1][]{\mom\big(#1\big)}

\begin{proof}
	Let $z\in\Mp$. We define the function $F_h \colon \fib{z}\to\cons^*$ by
	\[
		F_h(z^+) \coloneqq \int_0^1 \ddp{}{h}\Big(\mom\big(\dflow[h](z^+)\big)\Big)(h \tau) \dd\tau
		.
	\]
	We see that~$F_h$ depends smoothly on~$h$.
	Notice that for $h \neq 0$, 
	\[
		F_h(z^+) = \frac{\mom(\dflow(z^+)) - \mom(\dflow[0](z^+))}{h} = \frac{\mom(\dflowh(z^+))}{h}
		,
	\]
	because $\dflow[0](z^+) = z^+$ and $z^+ \in \Man \iff \mom(z^+) = 0$. 
	
	Consider now the case $h=0$.
	The method $\dflowh$ is consistent, so $(\dd\dflowh/\dd h)|_{h=0}(z^+) = X_H(z^+)$, which shows that
	\begin{equation}
	\label{eq_Fzero}
		F_0(z^+) =  \dd\mom(z^+)\cdot X_H(z^+) .	
	\end{equation}
	
	We want to find a neighbourhood $O_h\subset\fib{z}$ of $z$ such that the equation $F_h(z^+)=0$ with $z^+\in O_h$ has a unique solution for small enough~$h$. 
	This will prove the claim.
	The strategy is to show that $\dd F_h(z)\colon \Tan_z\fib{z}\to\cons^*$ is non-singular.
	We start with the case~$h=0$. 
	
	Using \eqref{eq_Fzero} and appealing to \autoref{lma_coisotropy}, $\dd F_0(z)$ is invertible under \autoref{assump:non_degeneracy} and \autoref{assump:coisotropy}.
	Thus, by the inverse function theorem, we can find an open neighbourhood $O_0\subset \fib{z}$ such that $F_0 \colon O_0\to  F_0(O_0)$ is a diffeomorphism. 
	Also, since $F_h$ depends smoothly on~$h$, it follows that~$\dd F_h(z)$ is invertible for small enough~$h$. 
	Thus, for small enough~$h$, we can find an open neighbourhood $O_h\subset \fib{z}$ such that $F_h \colon O_h\to  F_h(O_h)$ is a diffeomorphism. 
	
	It remains now to show that $0\in  F_h(O_h)$, so that the equation $F_h(z^+)=0$ with $z^+\in O_h$ has a unique solution.
	Now, if $z\in\Mp$ it follows from~\autoref{prop:MH_extrinsic} that $X_H(z)$ is tangential to $\Man$, which means that $\dd \mom(z) \cdot X_H(z)=0$, so we get $F_0(z)=0$. 
	Thus, $0\in F_0(O_0)$, and it follows by smoothness that $0\in F_h(O_h)$ for small enough~$h$.
	%
\end{proof}

\begin{remark}
The coisotropy assumption is essential for the result of \autoref{thm:Mvariational_methods}, because it uses \autoref{lma_coisotropy} which depends on that assumption in an essential manner.
On the other hand, if \eqref{eq:launch} has a unique solution in a neighbourhood of $\Mp$, then \autoref{assump:coisotropy} must hold.

Indeed,  the theorem implies that $\fib{z}$ is locally diffeomorphic to $\cons^*$.
This implies that their dimension is the same, so $\ker \omegaM$ must have the same dimension as $\cons^*$.

As a result,
\begin{equation*}
\dim \ker \omegaM = \dim \cons^* = \codim \Man = \dim \Tan \Man\sorth
.
\end{equation*}
Now, in general $\ker \omegaM = \Tan\Man \cap \Tan\Man\sorth$, so we get  $\Tan\Man\sorth \subset \Tan\Man$, which implies that $\Man$ is a coisotropic submanifold.
\end{remark}

The result of \autoref{thm:Mvariational_methods} allows one to define the sliding map $\launch{\dflow}$ by \eqref{eq:slidingdef}.
The corresponding \Shake and \Rattle maps, $\Sh$ and $\Rh$, are thus defined when $h$ is sufficiently small.


\subsection{Fibre Parametrisation}

When the manifold $\Man$ is defined implicitly as the locus of functions $g_i$, one can express the effect of $\launch{\dflow}$ using the flows of the Hamiltonian vector fields $\Ham{g_i}$.

\begin{proposition}\label{prop:parametrisation_of_launch}
	Suppose that the $\dflowh$-\Shake method is well defined, so that $\launch{\dflowh}$ is well defined.
	Under \autoref{assump:coisotropy} there exist functions $\lambda_1,\ldots,\lambda_\ncons\in\mathcal{C}^\infty(\Man)$ such that the sliding map $\launch{\dflowh}$ is given by
	\begin{equation*}
		\launch{\dflowh}(x) = \exp\Big(h\sum_{i=1}^\ncons\lambda_i(x)\Ham{g_i}\Big)(x)
	\end{equation*}
\end{proposition}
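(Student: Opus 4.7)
The plan is to combine three ingredients already established in the paper: (i) the fibre–preserving nature of the sliding map, (ii) the coisotropic fibre parametrisation from \autoref{cor:fibre_parametrisation}, and (iii) the smooth dependence of $\launch{\dflowh}$ on the step size $h$ coming from the implicit function theorem used in \autoref{thm:Mvariational_methods}.

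First I would note that by the defining equation~\eqref{eq:slidingdef} of the sliding map, $\launch{\dflowh}(x)\in \fib{x}$ for every $x$ in a suitable neighbourhood of $\Mp$. Under \autoref{assump:coisotropy}, \autoref{cor:fibre_parametrisation} gives a local diffeomorphism $\cons\ni\alpha\mapsto \exp(\Ham{\cfunc{\alpha}})(x)\in\fib{x}$ around $\alpha=0$; expanding $\Ham{\cfunc{\alpha}}=\sum_{i=1}^{\ncons}\alpha_i \Ham{g_i}$ in the basis $\{e_i\}$ dual to the components of $\mom$, this shows that each point of $\fib{x}$ near $x$ is uniquely written as $\exp\!\big(\sum_i\alpha_i\Ham{g_i}\big)(x)$ for a unique $\alpha(x)\in\cons$. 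Consequently there exist smooth functions $\mu_i$ such that
\begin{equation*}
\launch{\dflowh}(x)=\exp\!\Big(\sum_{i=1}^{\ncons}\mu_i(x,h)\Ham{g_i}\Big)(x).
\end{equation*}

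Next I would use the fact that $\dflow[0]=\mathrm{id}$, so at $h=0$ the sliding equation $\mom(\dflow[0](z^+))=0$ with $z^+\in\fib{x}$ is satisfied by $z^+=x$, and the uniqueness supplied by \autoref{thm:Mvariational_methods} forces $\launch{\dflow[0]}=\mathrm{id}$. Hence $\mu_i(x,0)=0$. The implicit-function-theorem construction inside the proof of \autoref{thm:Mvariational_methods} also delivers smooth dependence of $\launch{\dflowh}(x)$ on $h$, and the parametrisation above is a local diffeomorphism depending smoothly on its base point, so $\mu_i(x,h)$ is smooth jointly in $(x,h)$. By Hadamard's lemma, $\mu_i(x,h)=h\,\lambda_i(x,h)$ with $\lambda_i$ smooth; absorbing the (inessential) $h$-dependence into the notation $\lambda_i(x)$ and using that $\sum_i h\lambda_i(x)\Ham{g_i}$ generates at time $1$ the same flow as $\sum_i \lambda_i(x)\Ham{g_i}$ generates at time $h$, we obtain the claimed formula.

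The only subtle point, and hence the main thing to check carefully, is the smooth and uniform‐in‐$h$ existence of the factorisation $\mu_i(x,h)=h\,\lambda_i(x,h)$: one must ensure that the coordinate vector $\mu(x,h)$ produced by inverting the fibre parametrisation is obtained as a single smooth solution of an implicit equation in $(x,h)$ down to $h=0$. This is exactly what the proof of \autoref{thm:Mvariational_methods} provides (the function $F_h$ there is smooth in $h$, its derivative at $h=0$ is invertible by \autoref{lma_coisotropy}, and the solution is the pull-back of $\mu$ through the fibre chart), so no additional work is required beyond invoking that construction.
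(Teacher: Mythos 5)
Your argument is correct and follows essentially the same route as the paper, whose entire proof is the single line ``Follows directly from \autoref{cor:fibre_parametrisation}'': both rest on the sliding map being fibre-preserving and on the coisotropic fibre parametrisation $\alpha\mapsto\exp(\Ham{\cfunc{\alpha}})(x)$. Your additional care in extracting the factor $h$ via $\launch{\dflow[0]}=\mathrm{id}$ and Hadamard's lemma is a legitimate (and welcome) elaboration of a point the paper leaves implicit, but it is not a different method.
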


\begin{proof}
	Follows directly from \autoref{cor:fibre_parametrisation}.
\end{proof}

\subsection{Convergence}

We may now show the convergence of \Shake and \Rattle.
The proof is essentially a standard convergence argument for \Rattle, which is a numerical method on the manifold~$\Mp$.
The convergence of \Shake is then obtained using \eqref{eq:shakeequalrattle}.

\begin{proposition}\label{pro:convergence}
	Suppose that \autoref{assump:non_degeneracy} and \autoref{assump:coisotropy} hold. Let~$\dflowh$ be a method consistent with $\dot z = X_H(z)$.
	Then the $\dflowh$--\Shake and $\dflowh$--\Rattle methods are convergent of order at least~$1$.
\end{proposition}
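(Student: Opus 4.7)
The plan is to first establish convergence of $\Rh$ as a one-step method on the symplectic manifold $\Mp$ and then transfer the result to $\Sh$ via the identity $\Proj \circ \Sh^k = \Rh^k$ established in~\eqref{eq:shakeequalrattle}. By \autoref{thm:MH_manifold} together with \autoref{prop:MH_extrinsic}, the exact dynamics on $\Mp$ is given by the smooth vector field $z \mapsto \Tan_z\Proj \cdot \Ham{H}(z)$, whose flow is Lipschitz on any compact subset of $\Mp$. Hence it suffices to establish local truncation error of order $h^2$ for $\Rh$; a standard Lady Windermere's-fan (discrete Grönwall) argument, carried out in local charts of $\Mp$, then yields global convergence of order at least $1$ on any finite time interval.

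For the consistency of $\Rh$ at a point $z\in\Mp$, I would return to the implicit equation $F_h(z^+)=0$ introduced in the proof of \autoref{thm:Mvariational_methods}. Since $F_h$ depends smoothly on $h$ and on $z$, and since $\dd F_0(z)\colon \Tan_z\fib{z}\to V^*$ is invertible by \autoref{lma_coisotropy}, the implicit function theorem furnishes a smooth branch $h\mapsto z^+(h)$ with $z^+(0)=z$ (because $F_0(z) = \dd\mom(z)\cdot\Ham{H}(z) = 0$ for $z\in\Mp$). In particular $\launch{\dflowh}(z) = z + h\,c + \bigO(h^2)$ with $c\in\Tan_z\fib{z}=\ker\omegaM_z$. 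Consistency of $\dflowh$ then gives
\begin{equation*}
\Sh(z) = \dflowh(\launch{\dflowh}(z)) = z + h\,\Ham{H}(z) + h\,c + \bigO(h^2).
\end{equation*}
Since $\ker\Tan_z\Proj = \ker\omegaM_z$, applying $\Proj$ kills the $c$-term and yields
\begin{equation*}
\Rh(z) = z + h\,\Tan_z\Proj\cdot\Ham{H}(z) + \bigO(h^2),
\end{equation*}
which is exactly consistency of order $1$ with the exact flow on $\Mp$. The standard convergence theorem for one-step methods on a (finite-dimensional) manifold then gives $|\Rh^N(z_0)-\gamma(Nh)|\leq C h$ for $Nh\leq T$.

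For \Shake, note that $\Sh$ is a fibre map, so $\Sh\circ\Proj = \Sh$ on $O$; iterating this with $\Proj\circ\Sh = \Rh$ produces $\Proj\circ\Sh^k = \Rh^k$. Hence $\Sh^k(z_0)$ and $\Rh^k(z_0)$ lie on the same fibre, and the one-step calculation above shows that the fibre displacement is $\Sh(z)-\Proj(\Sh(z)) = h\,c + \bigO(h^2) = \bigO(h)$ whenever the input is within $\bigO(h)$ of $\Mp$. Since all iterates $\Rh^k(z_0)\in\Mp$ stay in a compact set by the Rattle convergence bound, this displacement is uniformly $\bigO(h)$ for $k\le T/h$, and the triangle inequality
\begin{equation*}
|\Sh^N(z_0)-\gamma(Nh)| \leq |\Sh^N(z_0)-\Rh^N(z_0)| + |\Rh^N(z_0)-\gamma(Nh)| = \bigO(h)
\end{equation*}
gives order-$1$ convergence of \Shake.

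\paragraph{Main obstacle.}
The delicate point is the justification that the correction $c$ in $\launch{\dflowh}(z)-z = hc+\bigO(h^2)$ actually lies in $\ker\omegaM_z$; this is what makes $\Proj$ absorb it and raises the local error from $\bigO(h)$ to $\bigO(h^2)$. It relies on the implicit branch from \autoref{thm:Mvariational_methods} being smooth through $h=0$ (so that the Taylor expansion is legitimate) and on $c$ being a tangent vector to the fibre, which in turn uses both \autoref{assump:non_degeneracy} (for invertibility of $\dd F_0$) and \autoref{assump:coisotropy} (through \autoref{lma_coisotropy} and the identification $\ker\Tan\Proj = \ker\omegaM$). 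The remaining bookkeeping---uniform bounds, compactness, discrete Grönwall---is standard.
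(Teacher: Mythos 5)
Your proposal is correct and follows essentially the same route as the paper: establish consistency of $\Rh$ as a one-step method for the \ODE{} $\dot z = \Tan\Proj\cdot\Ham{H}(z)$ on $\Mp$ by noting that the $\bigO(h)$ fibre-sliding correction lies in $\ker\Tan\Proj$, then transfer to \Shake{} via \eqref{eq:shakeequalrattle} and the $\bigO(h)$ fibre displacement. Your explicit appeal to the implicit function theorem applied to $F_h$ to justify smoothness of $h\mapsto\launch{\dflowh}(z)$ through $h=0$ makes rigorous a step the paper's proof leaves implicit, but it is not a different argument.
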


\begin{proof}
	We first turn to \Rattle.
	The continuous system is an ordinary differential equation on $\Mp$ with vector field $\Tan\Proj\circ\Ham{H}$ (see \autoref{prop:MH_extrinsic}).
	Since it is an ordinary differential equation, we only need to show that \Rattle (defined in \autoref{def:geometric_rattle}) is consistent and standard arguments may then be used to show convergence of order~1 (see \cite[\S\!~II.3]{HaNoWa93}).
	Using that $\Rh = \Proj\circ\dflowh\circ\launch{\dflowh}$ and differentiating at $h=0$ we get
	\begin{equation*}
		\ddp{\Rh}{h}\Big|_{h=0} = \Tan\Proj \circ\Big(\ddp{\dflowh}{h}\Big|_{h=0} + \ddp{\launch{\dflowh}}{h}\Big|_{h=0}\Big)
		,
	\end{equation*}
	which follows since $\dflow[0]=\mathrm{id}$ and $\launch{\dflow[0]}=\mathrm{id}$.
	The second term is in the kernel of $\Tan\Proj$, so
	\begin{equation*}
	\ddp{\Rh}{h}\Big|_{h=0} = \Tan\Proj\circ\ddp{\dflowh}{h}\Big|_{h=0}
	.
	\end{equation*}
	The assumption that $\dflowh$ is consistent with $\dot z = \Ham{H}(z)$ means that $\ddp{\dflowh}{h}\big|_{h=0}=\Ham{H}$.
	Consistency of $\Rh$ then follows.

	Now, using \eqref{eq:shakeequalrattle} and that $\launch{\dflow}(z)= z + \mathcal{O}(h)$ when $h\to 0$, we also see that \Shake converges of order at least~1.
\end{proof}

\begin{remark}
Interestingly, the local error of \Shake is only~$\mathcal{O}(h)$, so standard arguments do not apply to show convergence.
However, due to the fact that \Shake is the same fibre mapping as \Rattle, this error does not accumulate, and the global error is still~$\mathcal{O}(h)$.
\end{remark}

\subsection{Symplecticity}

We examine in which sense \Shake and \Rattle may be regarded as symplectic methods.
The essential result is that both \Shake and \Rattle are \emph{presymplectic}, i.e., they preserve the presymplectic structure of $\Man$.

\begin{proposition}\label{pro:presymplecticy_of_shake}
	Let $\dflowh$ be a symplectic method.
	Then the corresponding \Shake map $\Sh$ and \Rattle map $\Rh$, regarded as mappings $\Man\to \Man$, are presymplectic.
\end{proposition}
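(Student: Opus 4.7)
The plan is to reduce everything to the factorisation $\Sh = \dflowh \circ \launch{\dflowh}$ and exploit two complementary facts: that $\dflowh$ is symplectic on the ambient $(\sympman,\omegaP)$, and that $\launch{\dflowh}$ is trivially presymplectic on $\Man$ by construction.

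First I would make precise the commuting diagram. By \eqref{eq:slidingdef}, the sliding map $\launch{\dflowh}$ sends $O$ into $O$ with $\launch{\dflowh}(z)\in\fib{z}$, so it is trivially presymplectic in the sense of \autoref{def_trivially_presymplectic}. By definition of \Shake, $\dflowh\circ\inclM\circ\launch{\dflowh}$ takes values in $\Man$, and we have the identity
\begin{equation*}
\inclM\circ \Sh \;=\; \dflowh\circ\inclM\circ\launch{\dflowh}
\end{equation*}
as maps $O\to\sympman$.

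Next, I pull back $\omegaP$ along both sides. On the left we get $\Sh^*\inclM^*\omegaP = \Sh^*\omegaM$. On the right we get
\begin{equation*}
\launch{\dflowh}^*\,\inclM^*\,\dflowh^*\omegaP \;=\; \launch{\dflowh}^*\,\inclM^*\omegaP \;=\; \launch{\dflowh}^*\omegaM,
\end{equation*}
where the first equality uses that $\dflowh$ is symplectic ($\dflowh^*\omegaP=\omegaP$). Now \autoref{pro:trivially_presymplectic_maps} gives $\launch{\dflowh}^*\omegaM=\omegaM$, so $\Sh^*\omegaM=\omegaM$, proving presymplecticity of \Shake.

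For \Rattle, I would observe that $\Rh = \Proj\circ \Sh$, and that the projection $\Proj\colon O\to\Mp$ provided by \autoref{cor:projection_onto_MH} is, by construction, trivially presymplectic (it sends each $x\in O$ to a point in $\fib{x}\cap\Mp$). Hence by \autoref{pro:trivially_presymplectic_maps} again, $\Proj^*\omegaM=\omegaM$ on $O$, and composing gives $\Rh^*\omegaM = \Sh^*\Proj^*\omegaM = \Sh^*\omegaM = \omegaM$. No real obstacle arises; the only subtlety is keeping the pull-backs straight across the inclusion $\inclM$, i.e.\ being careful that symplecticity of $\dflowh$ is a statement on $\sympman$ while presymplecticity of $\Sh$ is a statement on $\Man$. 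The whole argument turns on the interplay between these two levels, mediated by the identity $\inclM\circ \Sh = \dflowh\circ\inclM\circ\launch{\dflowh}$.
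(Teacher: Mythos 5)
Your argument is correct and is essentially the paper's own proof: factor $\Sh = \dflowh\circ\launch{\dflowh}$, use that $\launch{\dflowh}$ is trivially presymplectic and $\dflowh$ is symplectic, then handle \Rattle via $\Rh=\Proj\circ\Sh$ with $\Proj$ trivially presymplectic. The only difference is cosmetic: you track the inclusion $\inclM$ explicitly in the pull-back computation, which makes the passage between $\omegaP$ on $\sympman$ and $\omegaM$ on $\Man$ slightly more careful than the paper's tangent-map formulation, but the route is identical.
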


\begin{proof}
	Since $\launch{\dflowh}$ is trivially presymplectic, it is in particular presymplectic, so
	\[
		\omegaM(u,v) = \omegaM(\Tan\launch{\dflowh}\cdot u,\Tan\launch{\dflowh}\cdot v),
		\qquad \forall\; u,v\in\Tan\Man.
	\]
	Further, since $\dflowh$ is symplectic it follows that
	\[
		\begin{split}
			\omegaM(u,v) &= \omegaM\big(\Tan\dflowh\cdot(\Tan\launch{\dflowh}\cdot u),\Tan\dflowh\cdot(\Tan\launch{\dflowh}\cdot v)\big)
			\\ &= \omegaM\big(\Tan(\dflowh\circ\launch{\dflowh}) \cdot u,\Tan(\dflowh\circ\launch{\dflowh}) \cdot v\big)
			\\ &= \omegaM\big(\Tan \Sh\cdot u,\Tan \Sh \cdot v\big), \qquad \forall\; u,v\in\Tan\Man.
		\end{split}
	\]
	Thus, $\Sh$ is presymplectic.

	Moreover, since $\Rh = \Proj \circ \Sh$ and $\Proj$ is trivially presymplectic, $\Rh$ is also presymplectic.
\end{proof}

\begin{proposition}
\label{prop_Mpd_symplectic}
	Let $\dflowh$ be a symplectic method.
	Under \autoref{assump:non_degeneracy} and \autoref{assump:coisotropy}, the set $\Mpd$ is a symplectic submanifold of $\Man$, with symplectic form $\omegaMpd = \incl{\Mpd}^*\omegaM$.
\end{proposition}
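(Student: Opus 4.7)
The plan is to first establish that $\Mpd$ is a smooth submanifold of $\Man$ of codimension $\ncons$, and then to deduce nondegeneracy of $\omegaMpd$ from the presymplecticity of $\Sh$ (\autoref{pro:presymplecticy_of_shake}) via a short lifting argument. I expect the submanifold structure to be the main technical point, while the symplecticity will then follow cleanly.

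For the submanifold structure, I would begin by noting that $\launch{\dflowh}\colon O\to O$ is a smooth idempotent: by construction $\launch{\dflowh}(x)$ lies in the fibre through $x$ and is characterised by $\dflowh(\launch{\dflowh}(x))\in\Man$, so the image coincides with the fixed-point set. \autoref{thm:Mvariational_methods}, combined with the implicit function theorem, shows that on each fibre the launch equation has a unique transverse solution, giving $\launch{\dflowh}$ locally constant rank equal to $\dim\Man-\ncons$. Hence $\launch{\dflowh}(O)$ is a smooth submanifold of $\Man$ of codimension $\ncons$. Since $\dflowh$ is a diffeomorphism of $\sympman$, its image $\Mpd=\dflowh(\launch{\dflowh}(O))$ is again a smooth submanifold, and it lies inside $\Man$ by the defining property of the launch map.

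For nondegeneracy of $\omegaMpd=\incl{\Mpd}^*\omegaM$, observe that $\Sh=\dflowh\circ\launch{\dflowh}\colon O\to\Mpd$ is the composition of a submersion onto $\launch{\dflowh}(O)$ with the diffeomorphism $\dflowh$, hence is itself a submersion. Thus at each $y=\Sh(x)\in\Mpd$ the map $\Tan_x\Sh\colon\Tan_x\Man\to\Tan_y\Mpd$ is surjective. Suppose $w\in\Tan_y\Mpd$ satisfies $\omegaMpd(w,w')=0$ for all $w'\in\Tan_y\Mpd$, and lift it to $u\in\Tan_x\Man$ with $\Tan_x\Sh\cdot u=w$. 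By \autoref{pro:presymplecticy_of_shake}, for every $u'\in\Tan_x\Man$,
\[
\omegaM(u,u')=\omegaM(\Tan\Sh\cdot u,\Tan\Sh\cdot u')=\omegaMpd(w,\Tan\Sh\cdot u')=0,
\]
so $u\in\ker\omegaM|_x=\Tan_x\fib{x}$. Since \Shake collapses each fibre $\fib{x}\cap O$ to the single point $\Sh(x)$, $\Tan\Sh$ vanishes on $\Tan_x\fib{x}$, hence $w=\Tan\Sh\cdot u=0$. Therefore $\omegaMpd$ is nondegenerate, and it is automatically closed as the pullback of a closed form.

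The main obstacle is the constant-rank assertion for $\launch{\dflowh}$, which is where \autoref{assump:non_degeneracy} and \autoref{assump:coisotropy} genuinely enter (through \autoref{lma_coisotropy}, as used in the proof of \autoref{thm:Mvariational_methods}). Once that is in place, the symplecticity of $\omegaMpd$ is a formal consequence of the presymplecticity of $\Sh$ and the fact that it is constant along fibres.
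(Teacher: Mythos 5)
Your proof is correct, but it takes a genuinely different route from the paper's. For the submanifold structure, the paper does not argue via the constant rank of the idempotent $\launch{\dflowh}$ on all of $O$; instead it restricts to $\Mp$ and shows directly that $\Sh\colon\Mp\to\Mpd$ is a diffeomorphism, using the splitting $\Tan_z\Man=\Tan_z\Mp\oplus\ker\omegaM$ from \autoref{thm:MH_manifold} to see that $\Tan_z\launch{\dflowh}$ (whose kernel is exactly $\Tan_z\fib{z}$) is injective on $\Tan_z\Mp$, and then composing with the diffeomorphism $\dflowh$. For nondegeneracy, the paper transports the already-established symplectic form $\omegaMp$ through $\Tan\Sh^{-1}$ and concludes from nondegeneracy on $\Mp$; you instead lift a putative kernel vector of $\omegaMpd$ through the submersion $\Sh$, use presymplecticity (\autoref{pro:presymplecticy_of_shake}) to place the lift in $\ker\omegaM=\Tan_x\fib{x}$ (this identification is where coisotropy enters for you), and kill it because $\Sh$ collapses each fibre to a point. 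Both arguments are sound. The paper's route yields as a by-product the explicit diffeomorphism $\Sh\colon\Mp\to\Mpd$, which is what makes \autoref{cor:symplecticity_of_shake} immediate; your route is more self-contained in that the nondegeneracy step never needs the symplecticity of $\Mp$, only the general principle that a presymplectic form pushed forward along a submersion whose kernel is exactly $\ker\omegaM$ is nondegenerate. The one point you should spell out more carefully is the constant-rank claim for $\launch{\dflowh}$: the clean justification is that $\launch{\dflowh}$ is constant on each fibre (so its rank is at most $\dim\Man-\ncons$) while, being fibre-preserving, it descends to the identity on the local quotient $\Mq$ (so its rank is at least $\dim\Man-\ncons$); this is implicit in your appeal to \autoref{thm:Mvariational_methods} but is the crux of the submanifold assertion.
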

\begin{proof}
	Under \autoref{assump:non_degeneracy} it follows from \autoref{thm:MH_manifold} that the set $(\Mp,\omegaMp)$ is a symplectic submanifold of~$\Man$.
	We first show that $\Mpd$ is diffeomorphic to $\Mpd$, and thus also a submanifold of~$\Man$, by constructing a diffeomorphism $\Mp\to\Mpd$.
	Under \autoref{assump:non_degeneracy} and \autoref{assump:coisotropy}, the map $\launch{\dflowh}\colon \Mp\to\Man$ is well defined.
	Let $z\in\Mp$.
	By construction we have $\ker(\Tan_z\launch{\dflowh}) = \Tan_z\fib{z}$.
	Thus, $\Tan_z\launch{\dflowh}\colon \Tan_z\Mp\to\Tan_{\launch{\dflowh}(z)}\Man$ is injective, so $\launch{\dflowh}\colon \Mp\to \launch{\dflowh}(\Mp)$ is a diffeomorphism.
	Next, since $\dflowh\colon \sympman\to\sympman$ is a diffeomorphism, it also holds that $\dflowh\colon \launch{\dflowh}(\Mp)\to \dflowh(\launch{\dflowh}(\Mp))=\Mpd$ is a diffeomorphism.
	Thus, $\Sh=\dflowh\circ\launch{\dflowh}$ is a diffeomorphism as a map $\Mp\to\Mpd$, so $\Mp$ and $\Mpd$ are diffeomorphic.

	Next, we show that the form $\omegaMpd=\incl{\Mpd}^*\omegaM$ is nondegenerate.
	Let $y = \Sh(z)$ and $u,v\in\Tan_{y}\Mpd$.
	From \autoref{pro:presymplecticy_of_shake} it follows that $\Sh$ is presymplectic, so
	\[
		\omegaM(u,v) = \omegaM(\Tan_{y}\Sh^{-1}\cdot u,\Tan_{y}\Sh^{-1}\cdot v) = \omegaMp(\Tan_{y}\Sh^{-1}\cdot u,\Tan_{y}\Sh^{-1}\cdot v) .
	\]
	Since $\Tan_{y}\Sh^{-1}\colon \Mpd\to\Mp$ is invertible, and since $(\Mpd,\omegaMp)$ is a symplectic submanifold, it follows that $\omegaM(u,v)=0$ for all $v\in\Tan_{y}\Mpd$ only if $u=0$, which shows that $(\Mpd,\omegaMpd)$ is a symplectic manifold.
\end{proof}

The following result follows directly from \autoref{thm:MH_manifold}, \autoref{pro:presymplecticy_of_shake} and \autoref{prop_Mpd_symplectic}. 

\begin{corollary}\label{cor:symplecticity_of_shake}
	Let $\dflowh$ be a symplectic method and
	assume that \autoref{assump:non_degeneracy} and \autoref{assump:coisotropy} hold.
	Then:
	\begin{itemize}
		\item 	The \Shake map $\Sh$, regarded a mapping $\Mpd\to\Mpd$, is symplectic.
		\item The \Rattle map $\Rh$, regarded as a mapping $\Mp\to\Mp$, is symplectic.
	\end{itemize}
\end{corollary}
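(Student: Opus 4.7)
The plan is to derive this corollary by combining three things: the presymplecticity of $\Sh$ and $\Rh$ as self-maps of $\Man$ (\autoref{pro:presymplecticy_of_shake}), the fact that $\Mp$ and $\Mpd$ are symplectic submanifolds of $\Man$ with forms obtained by pulling back $\omegaM$ (\autoref{thm:MH_manifold} and \autoref{prop_Mpd_symplectic}), and the elementary observation that a presymplectic map on $\Man$ that preserves a symplectic submanifold automatically descends to a symplectic map on that submanifold. The only genuine verification is that $\Sh$ actually maps $\Mpd$ to itself and that $\Rh$ actually maps $\Mp$ to itself; everything else is then essentially a formal pullback computation.

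First I would check the invariance of the two submanifolds under the respective maps. By the very definition $\Mpd = \dflowh\circ\launch{\dflowh}(O) = \Sh(O)$, and since $\Mpd \subset O$, we get $\Sh(\Mpd) \subset \Sh(O) = \Mpd$, so $\Sh$ restricts to a self-map of $\Mpd$. For \Rattle, the equality $\Rh = \Proj \circ \Sh$ together with the target $\Proj \colon O \to \Mp$ from \autoref{cor:projection_onto_MH} makes $\Rh(\Mp) \subset \Mp$ immediate.

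Next I would transfer presymplecticity to symplecticity. For $u,v \in \Tan_z \Mpd$ with $z \in \Mpd$, the inclusion $\incl{\Mpd} \colon \Mpd \hookrightarrow \Man$ and the relation $\omegaMpd = \incl{\Mpd}^*\omegaM$ give
\begin{equation*}
	(\Sh\restr{\Mpd})^*\omegaMpd(u,v) = \omegaM(\Tan \Sh \cdot u, \Tan \Sh \cdot v) = \omegaM(u,v) = \omegaMpd(u,v),
\end{equation*}
where the middle equality is \autoref{pro:presymplecticy_of_shake}. Since \autoref{prop_Mpd_symplectic} asserts that $\omegaMpd$ is nondegenerate on $\Mpd$, this is precisely symplecticity of $\Sh \colon \Mpd \to \Mpd$. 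The same argument, with $\Proj\circ\Sh = \Rh$, $\incl{\Mp}$, and $\omegaMp = \incl{\Mp}^*\omegaM$ in place of their tilded counterparts, yields symplecticity of $\Rh \colon \Mp \to \Mp$.

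I do not expect a real obstacle here: the substantive work has already been done in the three cited results, and the corollary is essentially bookkeeping. The one point requiring a moment's care is making explicit that the ``symplectic form'' on each of $\Mp$ and $\Mpd$ is exactly the pullback of the ambient presymplectic form $\omegaM$, so that a self-map of $\Man$ satisfying $\Sh^*\omegaM = \omegaM$ automatically satisfies the pulled-back identity on the submanifold; once that identification is made, symplecticity is immediate from nondegeneracy of the restricted form.
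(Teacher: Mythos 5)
Your argument is correct and matches the paper's, which simply states that the corollary ``follows directly from'' \autoref{thm:MH_manifold}, \autoref{pro:presymplecticy_of_shake} and \autoref{prop_Mpd_symplectic}; you have merely made explicit the two routine verifications (invariance of $\Mp$ and $\Mpd$ under $\Rh$ and $\Sh$, and the pullback bookkeeping identifying $\omegaMp$ and $\omegaMpd$ with restrictions of $\omegaM$) that the paper leaves to the reader.
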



\subsection{Time Reversibility}

	Just as in the holonomic case, if the underlying method $\dflow$ is \emph{symmetric}, i.e., $\dflow^{-1} = \dflow[-h]$, then \Rattle is also symmetric and thus of second order.
	Note that \Shake can never be symmetric, because although $\Sh$ preserves $\Mpd$, the reverse method $\Smap{-h}$ does not.


\begin{proposition}
	If the underlying method $\dflow$ is symmetric, then so is \Rattle, considered as a method from $\Mp$ to $\Mp$.
\end{proposition}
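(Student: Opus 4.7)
The plan is to unfold both $\Rh$ and $\Rmap{-h}$ in terms of the sliding maps and the projection $\Proj$, and then use the uniqueness part of \autoref{thm:Mvariational_methods} to identify the intermediate points.

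Fix $z_0 \in \Mp$ and set
$$z_0^+ := \launch{\dflowh}(z_0), \qquad z_1^- := \Sh(z_0) = \dflowh(z_0^+), \qquad z_1 := \Rh(z_0) = \Proj(z_1^-).$$
The key structural fact (used repeatedly in \autoref{sec:geometry_of_shake_and_rattle}) is that $\launch{\dflowh}$ and $\Proj$ are both trivially presymplectic, so $z_0^+ \in \fib{z_0}$ and $z_1 \in \fib{z_1^-}$; in particular $z_1^- \in \fib{z_1}$. I would then aim to show that $\Rmap{-h}(z_1) = z_0$, which is exactly what symmetry on $\Mp$ demands.

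To evaluate $\Rmap{-h}(z_1)$, I need the point $\launch{\dflow[-h]}(z_1)$, i.e., the unique point $w \in \fib{z_1}$ with $\dflow[-h](w) \in \Man$. The natural candidate is $w = z_1^-$: it lies in $\fib{z_1}$, and using the symmetry assumption $\dflow[-h] = \dflowh^{-1}$ one computes
$$\dflow[-h](z_1^-) = \dflowh^{-1}(\dflowh(z_0^+)) = z_0^+ \in \Man.$$
By the local uniqueness provided by \autoref{thm:Mvariational_methods} (applied with step $-h$, which is still small), we conclude $\launch{\dflow[-h]}(z_1) = z_1^-$. Consequently
$$\Rmap{-h}(z_1) = \Proj\big(\dflow[-h](z_1^-)\big) = \Proj(z_0^+).$$
Since $z_0^+ \in \fib{z_0}$ and $z_0 \in \Mp \cap \fib{z_0}$, the uniqueness of the projection in \autoref{cor:projection_onto_MH} gives $\Proj(z_0^+) = z_0$. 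Hence $\Rmap{-h}\circ \Rh = \mathrm{id}$ on $\Mp$, which is symmetry.

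The only delicate point is the appeal to uniqueness: one must verify that $z_1$ is close enough to $\Mp$, and $-h$ small enough, that the hypotheses of \autoref{thm:Mvariational_methods} apply so that $\launch{\dflow[-h]}(z_1)$ is genuinely characterised as the \emph{unique} solution of the sliding equation on $\fib{z_1}$. This is automatic for $h$ sufficiently small since $\Rh$ is $\bigO(h)$-close to the identity on $\Mp$ by \autoref{pro:convergence}, and the neighbourhood of validity in \autoref{thm:Mvariational_methods} is open. Once uniqueness is in place, the argument is essentially formal manipulation of $\dflow^{-1} = \dflow[-h]$.
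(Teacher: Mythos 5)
Your proof is correct and follows essentially the same route as the paper's: both identify $z_1^-$ as $\launch{\dflow[-h]}(z_1)$ via the uniqueness statement of \autoref{thm:Mvariational_methods}, and then use $\Proj(z_0^+) = z_0$ for $z_0 \in \Mp$ to conclude $\Rmap{-h}(z_1) = z_0$. Your explicit remark on verifying that the uniqueness neighbourhood applies is a small but welcome addition the paper leaves implicit.
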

\begin{proof}
	This follows from the symmetry property of the map $\launch{\dflow}$.
	In general, if $z_0^+ = \launch{\dflow[]}(z_0)$, $z_1^- =  \dflow[](z_0^+)$, and $z_1 = \Proj (z_1^-)$ (see \autoref{fig_shake}), then
	\begin{equation}
	\label{eq:launch_symmetric}
	z_1^- = \launch{\dflow[]\inv}(z_1)
	.
	\end{equation}
	This follows from \autoref{thm:Mvariational_methods}, because $z_1^-$ is a solution of the equation $\mom\big(\dflow[]\inv(z_1)\big) = 0$, so it must be equal to $\launch{\dflow[]^{-1}}(z_1)$.

	Suppose that we start from a point $z_0 \in \Mp$.
	The image by the \Rattle map is $z_1 = \Rh(z_0)$.
	Now, since $z_0 = \Proj(z_0^+)$ (as we assumed that $z_0 \in \Mp$), and \eqref{eq:launch_symmetric}, we obtain $z_0 = \Proj \circ \dflow\inv \circ \launch{\dflow\inv}$.
	
	If we assume now that $\dflow$ is symmetric, i.e., $\dflow[h]\inv = \dflow[-h]$, we obtain $z_0 = \Rmap{-h}(z_1)$, so \Rattle is symmetric.
\end{proof}


\section{Examples} 
\label{sec:examples}

In this section we give examples of constrained problems that can be solved with the geometric \Shake and \Rattle methods.
In \autoref{sec_example_high_index} we also give non-trivial examples where the nondegeneracy assumption fails to hold.

\subsection{Holonomic Case} 
\label{sec:holonomic_case}

In this section we study the classical, so-called \emph{holonomic} case, where the constraints depend only on the position $q$, and not on the momentum $p$.

\subsubsection{Classical Assumptions}
\label{sec:holonomic_solvability}

Consider the classical case of constrained mechanical systems, where the symplectic manifold $\sympman$ is simply $\RR^{2n}$, with coordinates $(q^i,p_i)$, equipped with the canonical symplectic form $ω = ∑_i \dd q^i \wedge \dd p_i$.
The constraints are given by  $g(q) = 0$ for a function $g$ defined from $\sympman$ to $\cons \coloneqq \RR^m$.
It should be emphasised that $g$ does not depend on $p$, i.e., $g_p = 0$.
The fibre passing through the point $z = (q,p)$ is then an affine subspace described parametrically by
\begin{equation*}
\big\{\, (q,p+ g_q(q)\trans λ) : λ \in \RR^m \,\big\}
.
\end{equation*}

\autoref{assump:coisotropy} is automatically fulfilled, because $\ddp{g_i}{p_j} = 0$ implies
\begin{equation*}
\Poisson{g_i}{g_j} = 0
.
\end{equation*}

Moreover, the standard assumption placed on the Hamiltonian $H$ is that the matrix
\begin{equation}
\label{eq_H_Solvability_HLW}
g_q(q) H_{pp}(q,p) g_q(q)\trans \qquad \text{be invertible}
,
\end{equation}
(see \cite[\S\!~VII.1.13]{HaLuWa06}, \cite{Re96}), which implies \autoref{assump:non_degeneracy}.

The condition \eqref{eq_H_Solvability_HLW} is more stringent than \autoref{assump:non_degeneracy}, since the latter only involves critical points of $H$ on a fibre, i.e., on the points lying on the hidden constraint manifold $\Mp$.
It is clear that the condition \eqref{eq_H_Solvability_HLW} is too restrictive when the fibres are compact, and we will examine such an example in \autoref{sec_example_hopf}.

\subsubsection{Integrators on cotangent bundles}

We explain here how to construct symplectic integrators on cotangent bundles $\Tan^*\ManQ$, for a configuration manifold 
\begin{equation}
\ManQ \coloneqq \big\{\, q \in \RR^n : g(q) = 0 \,\big\}
.
\end{equation}
The constraint manifold $\Man$ is then
\begin{equation}
\Man \coloneqq \big\{\, (q,p) \in \Tan^*\RR^{n} : g(q) = 0 \,\big\}
.
\end{equation}
The crucial observation is that $\Tan^*\ManQ$ is \emph{canonically} symplectomorphic to $\Mq$.
We can now use \Shake to construct a symplectic integrator on $\Tan^*\ManQ$ as follows.

Suppose that we are given a Hamiltonian $H$ on $\Tan^*\R^n$.
Starting from a point $\xi_0 \in \Tan^*\ManQ \simeq \Mq$, one lifts it to a point $z_0 \in \Man$ such that $\fib{z_0} = \xi_0$.
\Shake then produces a new point $z_1^-$, which by projection gives $\xi_1 \coloneqq [z_1^-]$.
Note that $\xi_1$ does not depend on the choice of the point $z_0$ on the fibre.
In particular, note that the mapping $\xi_0 \mapsto \xi_1$ is exactly the same if \Rattle is used instead of \Shake.
The mapping $\xi_0 \mapsto \xi_1$ is then a symplectic integrator on $\Tan^*\ManQ$ for the Hamiltonian on $\Mq$ given by the projection of $\incl{\Mp}^{*}H$.
This method has the same order of convergence as \Rattle.

For instance, in \autoref{ex:pendulum} the configuration manifold $\ManQ$ is a circle, so $\Man$ is the vector bundle of planes above the points of the circle.
Each such plane is spanned by two vectors: one normal and one tangential to the circle.
\Shake and \Rattle differ only by the normal component.
Since a co-vector in $\Tan^*\ManQ$ is defined by its scalar product with tangent vectors in~$\Tan\ManQ$, its normal component has no effect, so \Shake and \Rattle have identical effect as integrators on $\Tan^*\ManQ$.

\subsubsection{Classical \Shake and \Rattle}
\label{sec_classical}

The classical \Shake method is only defined for separable Hamiltonian functions (see \cite[\S\!~VII.1.23]{HaLuWa06}, \cite{Re96},  \cite{LeSk1994}).
However, it is readily extended to general Hamiltonians as the mapping $(q_0,p_0) \mapsto (q_1,p^-_1)$ defined by
\begin{equation*}
\begin{split}
p^+_0 &= p_0 - \frac{h}{2}g_q(q_0)\trans λ \\
p_{1/2} &= p^+_0 - \frac{h}{2} H_q(q_0,p_{1/2})  \\
q_{1} &= q_{0} + \frac{h}{2} \big(H_p(q_0,p_{1/2}) + H_p(q_{1},p_{1/2})\big) \\
p^-_{1} &= p_{1/2} - \frac{h}{2}H_q(q_1,p_{1/2}) \\
0 &=  g(q_1) 
 .
\end{split}
\end{equation*}

The classical \Rattle method is obtained by adding a projection step onto the manifold $\Mp$, i.e., the next step is instead $(q_1,p^-_1)\mapsto (q_1,p_1)$ defined, on top of \Shake, as
\begin{equation*}
\begin{split}
p_{1} &= p^-_{1} + g_q(q_1)\trans μ \\
0 &= g_q(q_1) H_p(q_1,p_1) 
.
\end{split}
\end{equation*}

We see that the mapping $(q_0,p^+_0) \mapsto (q_1,p^-_1)$ is the Störmer-Verlet method, so  the classical \Shake and \Rattle methods  are obtained using the Störmer-Verlet method as the underlying unconstrained symplectic method.

Similarly, in \cite[\S\!~VII.1.3]{HaLuWa06}, a first order method is described as
\begin{equation*}
\begin{split}
p^+_0 &= p_0 - h g_q(q_0)\trans λ \\
p^-_1 &= p^+_0 - h H_q(q_0,p^-_1)  \\
q_1 &= q_0 + h H_p(q_0,p^-_1) \\
0 &= g(q_1) 
\end{split}
\end{equation*}
We see that the mapping $(q_0,p^+_0) \mapsto (q_1,p^-_1)$ is the symplectic Euler method, so this is the \Shake method obtained using symplectic Euler as the underlying unconstrained symplectic method.
When complemented with the projection step 
\begin{equation*}
\begin{split}
p_1 &= p^-_1 -  h g_q(q_1)\trans μ \\
0 &= g_q(q_1)H_p(q_1,p_1) 
,
\end{split}
\end{equation*}
that method becomes the corresponding \Rattle method.


\subsection{Examples of Failing Nondegeneracy Assumption}
\label{sec_example_high_index}

Let us examine an example where \autoref{assump:non_degeneracy} is not fulfilled.
The manifold $\sympman$ is~$\RR^4$ with coordinates $q,p,\qb,\pb$, and the symplectic form is $ω = \dd q \wedge \dd p + \dd \qb \wedge \dd \pb$.
The constraint function $\mom$ is 
\begin{equation*}
\mom(q,p,\qb,\pb) = \qb
.
\end{equation*}
The corresponding fibres are the lines parametrised by $\pb$, i.e., the fibres are the lines of equation $(q,p,\qb) = (q_0,p_0,\qb_0)$.
We choose the Hamiltonian 
\begin{equation*}
H = \frac{p^2}{2} + \pb q 
.
\end{equation*}

The hidden constraint manifold is then $\Mp = \{\, q = 0 \,\}$.
The restriction of $H$ on a fibre is a linear function (because $H$ is linear in $\pb$), so its critical points are degenerate, and \autoref{assump:non_degeneracy} is not fulfilled.
As a result, there are extra constraints which prevent the problem from being well-posed on $\Mp$.
It is readily verified that the system has in fact index five instead of three, the tertiary and quaternary constraints being, respectively, $p = 0$ and $\pb=0$.

If we had assumed instead that $H = \frac{p^2}{2}$, the hidden constraint set would be $\Mp = \Man$, because $H$ is now constant on the fibres (because $H$ does not depend on $\pb$).
In that case, there are infinitely many solutions to the problem at hand.

\subsection{Index 1 Constraints}
\label{sec:indexone}

\newcommand*\mdl[2]{\frac{#1+#2}{2}}

Let $\sympman = \RR^{2n+2k}$ with coordinates $q\in\RR^n$, $p\in\RR^n$, $\alpha\in\RR^k$, $\beta\in\RR^k$, symplectic form $\omega = \dd q \wedge \dd p+ \dd \alpha \wedge \dd \beta$, Hamiltonian $H(q,p,\beta)$, and consider the holonomic constraint $\alpha = 0$. Because the constraints are holonomic, the coisotropy assumption is satisfied (see \autoref{sec:holonomic_case}).
The presymplectic manifold is $\Man  = \RR^{2n+k}$ with coordinates $(q,p,\beta)$ and presymplectic form $\nu =  \dd q\wedge \dd p$.
The hidden constraint submanifold is $\Mp = \big\{\, (q,p,\lambda): H_\beta(q,p,\beta)=0 \,\big\}$, giving the presymplectic system
\begin{equation}
\label{eq:indexone}
\begin{aligned}
\dot q &= H_p(q,p,\beta) \\
		\dot p &= -H_q(q,p,\beta) \\
		0 & = H_\beta(q,p,\beta) \\
\end{aligned}
\end{equation}		
The nondegeneracy assumption is that $H_{\beta\beta}$ is non-singular on $\Mp$. When this holds, the secondary constraint can be solved for $\beta=B(q,p)$.
This is the case of index one constraints considered in \cite{mc-mo-ve-wi}.
In this paper, a symplectic integrator is introduced and studied for the index one constrained Hamiltonian system (\ref{eq:indexone}), consisting of a direct application of a symplectic Runge--Kutta method to the full system (\ref{eq:indexone}).
In the case when~$\dflowh$ is the midpoint rule, we now show that this method is equivalent to the application of \Shake or  \Rattle applied on~$\sympman$.
Indeed, Hamilton's equations on~$\sympman$ are given by
\[
\begin{aligned}
\dot q &= H_p(q,p,\beta) \\
\dot p & = -H_q(q,p,\beta) \\
\dot \alpha &= H_\beta(q,p,\beta) \\
\dot \beta &= 0 \\
\end{aligned}
\]
for which the midpoint discretisation, defining $\dflowh$, is
\begin{equation}
\begin{aligned}
 \frac{q_1 - q_0}{h} &= H_p\Big(\mdl{q_0}{q_1},\mdl{p_0}{p_1},\mdl{\beta_0}{\beta_1}\Big) \\
\frac{p_1 - p_0}{h} &= -H_q\Big(\mdl{q_0}{q_1},\mdl{p_0}{p_1},\mdl{\beta_0}{\beta_1}\Big) \\
\frac{\alpha_1-\alpha_0}{h} &= H_\beta\Big(\mdl{q_0}{q_1},\mdl{p_0}{p_1},\mdl{\beta_0}{\beta_1}\Big) \\
\beta_1 - \beta_0 &= 0 \\
\end{aligned}
\end{equation}
while the constraint flow is given by $\exp(\lambda X_\alpha) \colon (q,p,\alpha,\beta) \mapsto (q,p,\alpha,\beta - \lambda)$.
Initially, the primary constraints are satisfied, i.e. $\alpha_0=0$, and the constraint flow determines $\beta_1$ so that $\alpha_1=0$, i.e., so that 
\[
0 = H_\beta\Big(\mdl{q_0}{q_1},\mdl{p_0}{p_1},\mdl{\beta_0}{\beta_1}\Big)
\]
with solution $\mdl{\beta_0}{\beta_1} = B(\mdl{q_0}{q_1},\mdl{p_0}{p_1})$.
This is exactly the method of \cite{mc-mo-ve-wi} in the case of the midpoint rule. 
The final step of {\Rattle} chooses $\beta$ so that $0 = H_\beta(q_1,p_1,\beta)$, but (as we have seen in the general case), this value is irrelevant for the next step.

\subsection{Harmonic Constraints}
\label{sec_example_hopf}

Consider the symplectic manifold $\sympman = \Tan^*\RR^{2}$ equipped with the canonical symplectic form
\begin{equation*}
\omegaP(q_0,p_0,q_1,p_1) = \dd q_0 \wedge \dd p_0 + \dd q_1 \wedge \dd p_1 
,
\end{equation*}
where $\qv = (q_0,q_1)$ and $\pv = (p_0,p_1)$ are canonical coordinates.
We often make the identification $\Tan^{*}\RR^{2} \simeq \CC^{2}$ by introducing the complex coordinates $z_0 = q_0 + \ii p_0$ and $z_1 = q_1 + \ii p_1$.

Consider now the constraint space $\cons = \RR$ and the single constraint function
\begin{equation}
\label{eq:hopf_constraint}
g(\qv,\pv) \coloneqq \|\qv\|^2 + \|\pv\|^2 - 1
.
\end{equation}
Note that the constraint manifold is given by $\Man = S^3$, i.e., the unit sphere in~$\RR^4$.
It is automatically a coisotropic submanifold since it has codimension one (see \autoref{rk:coisotropy_poisson_commute}).

The flow of $\Ham{g}$ expressed in complex coordinates is given by 
\begin{equation*}
\exp(t \Ham{g})(z_0,z_1) = (\ee^{\ii t}z_0,\ee^{\ii t}z_1)	
.
\end{equation*}
Thus, it follows from \autoref{cor:fibre_parametrisation} that the fibre $\fib{(z_0,z_1)}$ passing through $(z_0,z_1)$ is given by the great circle
\begin{equation*}
	\fib{(z_0,z_1)} = \big\{\, (\ee^{\ii s}z_0,\ee^{\ii s}z_1)\in \CC^{2} : s\in \RR \,\big\}.
\end{equation*}
These fibres are exactly the fibres in the classical Hopf fibration of the 3--sphere in circles.
Recall that the Hopf map is the submersion
\begin{equation}
\label{eq_hopf}
(z_0,z_1) \mapsto \big(2z_0 \bar{z_1}, |z_0|^2 - |z_1|^2\big)
\end{equation}
which projects $\CC^2$ onto $\R^3 \simeq \CC \times \RR$ and maps $3$-spheres onto $2$-spheres.
The quotient set $\Mq$ is thus a manifold which is diffeomorphic to the 2-sphere.

We now investigate the validity of \autoref{assump:non_degeneracy} for two different choices for the Hamiltonian function.

\subsubsection{Hamiltonian without Potential Energy}

%
First consider the Hamiltonian
\begin{equation*}
H(\qv,\pv) = \frac{\|\pv\|^2}{2}
.
\end{equation*}
The governing equations \eqref{eqconslag} are in this case given by
\begin{equation}
\label{eq_hopf_dae}
\begin{split}
\dot{\qv} &= (1 + λ)\pv \\
\dot{\pv} &= -λ\qv \\
1 &= \|\qv\|^2 + \|\pv\|^2 
.
\end{split}
\end{equation}

Let $(\qv,\pv)\in \Man$ and let
\begin{equation*}
	\Hr(\theta) := H\big(\cos(\theta)\qv + \sin(\theta)\pv,-\sin(\theta)\qv + \cos(\theta)\pv\big)
\end{equation*}
so that $\Hr$ is the restriction of $H$ to the fibre containing $(\qv,\pv)$ and parameterised by $\theta \in \RR/\ZZ$.
Notice that $\Hr(0) = H(\qv,\pv)$. 
After trigonometric simplification we obtain
\begin{equation*}
\Hr(θ) = \frac{1}{4}\Big( \|\pv\|^2 \big(1+\cos(2θ)\big) + \|\qv\|^2\big(1 - \cos(2θ)\big) + 2 \qv \cdot \pv \sin(2θ)  \Big)
.
\end{equation*}
From \autoref{prop_Mp_critical} it follows that $(\qv,\pv) \in \Mp$ if and only if it is a critical point of $H$ in the direction of the fibre, i.e., $\Hr'(0) = 0$.
By differentiating, one obtains
\begin{equation}
\label{eq_Hprime}
\Hr'(θ) = \frac{1}{2}\Big( (\|\qv\|^2 - \|\pv\|^2) \sin(2θ) + 2 \qv \cdot \pv \cos(2θ) \Big)
,
\end{equation}
which leads to
\begin{equation}
\label{eq_harmonic_Mp}
\Mp = \big\{\, (\qv, \pv) \in \Man : \qv \cdot \pv = 0 \,\big\}
.
\end{equation}
This could also have been derived by differentiating the constraints in the differential algebraic equation \eqref{eq_hopf_dae}, or by using one of the equivalent formulations of \autoref{rk_Mp_alter}.

The critical point $(\qv,\pv)\in \Mp$ is nondegenerate if and only if $\Hr''(\theta) \neq 0$, which, using~\eqref{eq_Hprime}, means that $\|\qv\|^2 - \|\pv\|^2 \neq 0$.
From this we also see that $(\qv,\pv)$ is degenerate critical point if and only if $\qv\cdot\pv = 0$ and $\|\qv\| = \|\pv\|$, which, again using~\eqref{eq_Hprime}, implies that the Hamiltonian is constant on the whole fibre.
A closer examination shows that there are two such fibres.
By the Hopf map these two fibres correspond to the two antipodal points $(\ii,0)$ and $(-\ii,0)$ on the 2--sphere in $\CC\times\RR$.
We call those two points the \emph{exceptional points} and the corresponding fibres the \emph{exceptional fibres}.
If we remove the points where $\|\qv\| = \|\pv\|$ from the constraint manifold, then $\Mp$ becomes a manifold with two connected components (one cannot get from a point where $\|\qv\| < \|\pv\|$ to a point where $\|\qv\| > \|\pv\|$ without crossing a point where $\|\qv\| = \|\pv\|$).

By differentiating the secondary constraints $\qv\cdot \pv = 0$ with respect to time, and substituting for $\dot\qv$ and $\dot\pv$ using~\eqref{eq_hopf_dae} we obtain
\begin{equation}\label{eq:hopf_lambda}
	\lambda = \frac{\|\pv\|^{2}}{\|\qv\|^{2} - \|\pv\|^{2}}.
\end{equation}
Since both the Hamiltonian and the constraint function are conserved along a solution trajectory, it follows that $\|\qv\|$ and $\|\pv\|$ are constant on each trajectory, so the Lagrange multiplier $\lambda$ is constant.
Using these facts it is straightforward to show that the exact solution to~\eqref{eq_hopf_dae} is given by
\begin{equation}\label{eq:hopf_dea_exact_solution}
	\begin{split}
		\qv(t) &= R\Big( \frac{t}{\alpha - 1/\alpha} \Big)\qv_0 \\
		\pv(t) &= R\Big(\frac{t}{\alpha - 1/\alpha}\Big)\pv_0
	\end{split}
\end{equation}
where $(\qv_0,\pv_0)\in \Mp$ are the initial conditions, $\alpha = \pm \, \|\pv_0\|/\|\qv_0\|$, and
\begin{equation*}
	R(\theta) := \begin{pmatrix} \cos(\theta) & -\sin(\theta) \\ \sin(\theta) & \cos(\theta) \end{pmatrix}.
\end{equation*}
(The sign of $\alpha$ depends on which connected component of the constraint manifold the point $(\qv_0,\pv_0)$ belong to.)
Thus, the exact solutions are given by rotations in the $\qv$ and $\pv$ planes.
These trajectories are great circles of the constraint manifold $\Man = S^3$, so both the fibres and the solution trajectories are given by great circles.
Notice that the solution trajectory passing through $(\qv_0,\pv_0)$ crosses the fibre passing through $(\qv_0,\pv_0)$ twice (see \autoref{fig_Hopf_fibre}).

We now turn to the solution of~\eqref{eq_hopf_dae} by \Shake and \Rattle.
Indeed, \Shake works as follows: for some $(\qv_0,\pv_0)$, move along the fibre so that the length of $(\qv_1^-,\pv_1^-)$ is~1 (to ``land'' again on $\Man$).
Now, $(\qv_0^+,\pv_0^+)$ is related to $(\qv_1^{-},\pv_1^{-})$ by an orthogonal reflection in the hyperplane perpendicular to $\Ham{H}(\qv_0^+,\pv_0^+) = (\pv_0^+,0)$.
One can check that this reflection sends fibres to fibres.
It is also straightforward to check that the reflection leaves solution trajectories invariant.
Therefore, the fibre passing through $(\qv_1^-,\pv_1^-)$ intersects the same solution trajectory as the fibre passing through $(\qv_0,\pv_0)$, which means that \Shake jumps between fibres of the same solution trajectory.
Hence, \Rattle reproduces the exact flow of \eqref{eq_hopf_dae} up to a time reparametrisation. 

Let us gather our findings:
\begin{proposition}
Consider the symplectic manifold $\sympman = \Tan^*\RR^{2}\simeq \CC^{2}$, the constraint submanifold
\begin{equation*}
\Man \coloneqq \Big\{\, (z_0,z_1)\in\sympman : |z_0|^2 + |z_1|^2 = 1 \quad\text{and}\quad (2z_0 \bar{z_1},|z_0|^2 - |z_1|^2) \neq (±\ii,0) \,\Big\}
,
\end{equation*}
and the Hamiltonian $H(z_0,z_1) = (\Im(z_0)^{2} + \Im(z_1)^{2})/2$.
Then:
\begin{enumerate}
	\item Both \autoref{assump:non_degeneracy} and \autoref{assump:coisotropy} hold.
	\item The hidden constraint manifold $\Mp$ is defined by \eqref{eq_harmonic_Mp}. 
	It has two connected components.
	\item The exact solution is given by~\eqref{eq:hopf_dea_exact_solution}.
	\item The solution trajectory and fibre corresponding to some initial data in $\Mp$ intersect twice.
	\item \Shake maps between fibres that intersect the same exact solution trajectory.
	\item Up to time reparametrisation, \Rattle produces the exact solution.
\end{enumerate}
\end{proposition}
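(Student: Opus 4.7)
Items~(1) and~(2) are settled by direct computation. Coisotropy follows from \autoref{rk:coisotropy_poisson_commute} since $\Man$ is cut out by a single function, and nondegeneracy comes from differentiating \eqref{eq_Hprime} once more: on $\Mp$ one obtains $\Hr''(0)=\|\qv\|^2-\|\pv\|^2$, which is nonzero precisely outside the two exceptional fibres already removed from $\Man$. The formula \eqref{eq_harmonic_Mp} for $\Mp$ has been derived in the excerpt from $\Hr'(0)=0$; its two connected components are $\{\|\qv\|^2>\|\pv\|^2\}\cap\Mp$ and $\{\|\qv\|^2<\|\pv\|^2\}\cap\Mp$, open subsets of $\Mp$ separated by the excluded locus $\|\qv\|=\|\pv\|$.

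For item~(3), the plan is to substitute \eqref{eq:hopf_dea_exact_solution} directly into the \DAE~\eqref{eq_hopf_dae}. Since the rotation $R$ preserves norms, $\|\qv(t)\|$ and $\|\pv(t)\|$ remain constant, so $(\qv(t),\pv(t))\in\Man$ and the Lagrange multiplier in \eqref{eq:hopf_lambda} is genuinely constant; using that $\qv_0\perp\pv_0$ in $\RR^2$ (which forces the $\pi/2$-rotation of $\qv_0$ in $\RR^2$ to be $\pm(\|\qv_0\|/\|\pv_0\|)\pv_0$) reduces everything to the algebraic identity $\|\qv_0\|/|\alpha-1/\alpha|=|1+\lambda|\,\|\pv_0\|$, which is immediate after substituting $\alpha=\pm\|\pv_0\|/\|\qv_0\|$ and the explicit value of $\lambda$. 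For item~(4), the trajectory lies in the $2$-plane $\Pi_T=\Span\{(\qv_0,\pv_0),\,(J\qv_0,J\pv_0)\}$, where $J$ denotes the $\pi/2$ rotation in $\RR^2$, and the fibre lies in $\Pi_F=\Span\{(\qv_0,\pv_0),\,(\pv_0,-\qv_0)\}$; both planes intersect $S^3$ in great circles. A short linear-algebra computation using $\qv_0\perp\pv_0$ shows that $(J\qv_0,J\pv_0)\in\Pi_F$ only when $\|\qv_0\|=\|\pv_0\|$, which is excluded. Hence $\Pi_T\cap\Pi_F=\RR(\qv_0,\pv_0)$, and the two great circles meet exactly at the antipodal pair $\pm(\qv_0,\pv_0)$.

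The main work, and the most delicate step, lies in items~(5)--(6). Rather than pursue the reflection argument sketched in the excerpt (which I find difficult to verify directly as a map of fibres to fibres), my plan is to use conservation of the second Hopf coordinate $Y(\qv,\pv)\coloneqq p_0q_1-q_0p_1$, the imaginary part of $z_0\bar z_1$. A direct verification shows $Y$ is preserved by (a)~the fibre flow $e^{\ii s}$, since $Y$ descends to the Hopf base; (b)~the unconstrained flow $\dflow[h](\qv,\pv)=(\qv+h\pv,\pv)$, because the cross term $hp_0p_1$ appears with opposite signs and cancels; and (c)~the exact constrained flow of item~(3), since applying the same $\RR^2$-rotation to $\qv$ and $\pv$ preserves the $2\times 2$ determinant $p_0q_1-q_0p_1$. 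Combining (a) and (b) gives $Y(\qv_1^-,\pv_1^-)=Y(\qv_0,\pv_0)$; since $\pi(T)$ is precisely the constant-$Y$ circle on $S^2$ (from (c) together with conservation of $\|\qv\|^2+\|\pv\|^2$), the fibre through $(\qv_1^-,\pv_1^-)$ meets $T$, which is item~(5). For item~(6), \autoref{cor:symplecticity_of_shake} gives that $\Rh$ is a symplectic diffeomorphism of the 2-dimensional manifold $\Mp$; by~(5) together with continuity in $h$ from the identity at $h=0$, the projection $\Proj$ selects the intersection of the fibre with $T$ lying in the connected component of $\Mp$ containing $(\qv_0,\pv_0)$, so $\Rh$ preserves $T$. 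As an orientation-preserving diffeomorphism of the closed $1$-dimensional circle $T$, $\Rh$ must then act by a time shift, yielding the exact flow up to a reparametrisation of time.
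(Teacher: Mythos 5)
Your items (1)--(4) track the paper's own derivation (coisotropy from codimension one, nondegeneracy from $\Hr''(0)=\|\qv\|^2-\|\pv\|^2$ on $\Mp$, the formula \eqref{eq_harmonic_Mp}, and verification of \eqref{eq:hopf_dea_exact_solution}); your two-planes argument for item (4) is in fact more explicit than the paper, which essentially asserts the double intersection. For items (5)--(6) you take a genuinely different route: the paper relates $(\qv_0^+,\pv_0^+)$ and $(\qv_1^-,\pv_1^-)$ by the orthogonal reflection in the hyperplane normal to $(\pv_0^+,0)$ and checks that this reflection maps fibres to fibres and preserves solution trajectories, whereas you track the conserved quantity $Y=p_0q_1-q_0p_1=\Im(z_0\bar z_1)$ through the fibre slide and the unconstrained step. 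Your invariance computations (a)--(c) are correct, and the conserved-quantity argument is arguably easier to verify than the reflection claim; both arguments rely on the same implicit choice that $\dflowh$ realises $(\qv,\pv)\mapsto(\qv+h\pv,\pv)$ exactly.

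Two soft spots. First, in item (5) you need $\pi(T)$ to be \emph{all} of the constant-$Y$ circle on $S^2$, not merely contained in it: conservation of $Y$ and of $\|\qv\|^2+\|\pv\|^2$ along $T$ only gives the inclusion, and if $\pi(T)$ were a proper arc your conclusion could fail. The missing line is supplied by \eqref{eq:hopf_dea_exact_solution}: in the Hopf coordinates $(u,v,w)=(2\Re(z_0\bar z_1),\,2\Im(z_0\bar z_1),\,|z_0|^2-|z_1|^2)$ the exact flow rotates $(u,w)$ at twice the angular rate while fixing $v$, so $\pi(T)$ sweeps the full circle (the radius $\sqrt{u^2+w^2}$ vanishes only on the excluded exceptional fibres). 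Second, your final step is wrong as stated: an orientation-preserving diffeomorphism of a circle need \emph{not} be a time shift (e.g.\ $\theta\mapsto\theta+\varepsilon\sin\theta$). What you have actually proved --- that $\Proj$ sends $z_1^-$ to the nearby intersection point of $\fib{z_1^-}$ with $T$, so $\Rh(T)\subseteq T$ --- is already the content of item (6) in the sense intended by the paper (the discrete orbit lies on the exact orbit). If you want the stronger statement that $\Rh|_T$ is literally a time shift, derive it from equivariance: $H$, $g$, the fibration and $\dflowh$ all commute with the diagonal $R(\theta)$ action whose orbit is $T$, so $\Rh|_T$ commutes with all rotations of $T$ and is therefore itself a rotation.
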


We now turn to a slightly modified problem, for which \Rattle does not reproduce the exact trajectory.


\begin{figure}
\centering
\includegraphics{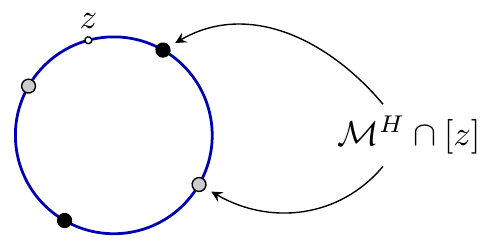}
\caption[Illustration]{An illustration of the intersection of a fibre $\fib{z}$ with $\Mp$ for the system \eqref{eq_hopf_dae}.
Fibres are big circles on the sphere $S^3$.
The two antipodal points of $\Mp$ on each fibre, marked with the same colour on the picture, are part of the same trajectory.
Those two trajectories belong to two distinct connected components of $\Mp$.
}
\label{fig_Hopf_fibre}
\end{figure}


\subsubsection{Hamiltonian with Linear Potential Energy}\label{sub_hopf_lin_potential}

With the same constraint, we consider now instead the Hamiltonian
\begin{equation}
\label{eq_Hamiltonian_grav}
H(\qv,\pv) = \frac{\|\pv\|^2}{2} - \gv \cdot \qv
\end{equation}
where $\gv$ is a fixed given vector.

Now the structure of the hidden constraint set $\Mp$ is more involved.
Let, as above, $\Hr$ be the restriction of $H$ to the fibre passing through $(\qv,\pv)\in \Man$.
Then we get
\begin{equation}
\label{eq_Hprime2}
\Hr'(θ) = \frac{1}{2}\big(\|\qv\|^2 - \|\pv\|^2\big) \sin(2θ) + \qv \cdot \pv \cos(2θ) - \gv\cdot\qv\sin(\theta) + \gv\cdot\pv\cos(\theta) .
\end{equation}
The intersection between $\fib{(\qv,\pv)}$ and $\Mp$ is now more complicated than for the previous Hamiltonian.
We have
\begin{equation*}
	\fib{(\qv,\pv)}\cap\Mp = \big\{\, \big(\qv\cos(\theta)+\pv\sin(\theta),-\sin(\theta)\qv + \cos(\theta)\pv\big): \Hr'(\theta)=0 \,\big\}.
\end{equation*}
From the form~\eqref{eq_Hprime2} of $\Hr'(\theta)$ it follows that $\fib{(\qv,\pv)}\cap\Mp$ contains either: 
\begin{enumerate}[label={(\roman*)}, ref={case~(\roman*)}]
	\item\label{caseregular} four nondegenerate critical points, or
	\item\label{caselower} two nondegenerate critical points, or
	\item\label{casecritical} two nondegenerate critical points and one degenerate critical point.
\end{enumerate}
Which case that occurs depends on the magnitude of the $\cos(2\theta),\sin(2\theta)$ coefficients in~\eqref{eq_Hprime2} in relation to the $\cos(\theta),\sin(\theta)$ coefficients in~\eqref{eq_Hprime2}: if the former are larger we get \ref{caseregular}, if the latter are larger we get \ref{caselower}, in the limit situation we get \ref{casecritical}.
Thus, for some initial data it might (and will, see \autoref{fig_fibres}) happen that the critical points in $\Mp$ consitituting the solution trajectory cease to be nondegenerate, in which case \autoref{assump:non_degeneracy} is not fulfilled and the equation is no longer well posed.

In \autoref{fig_numtests_grav} we give some simulation results using \Shake and \Rattle for globally well posed trajectories.

In \autoref{fig_energyconstraints} we display the energy evolution of \Shake and \Rattle and fulfillment of the hidden constraints for the \Shake method.

In \autoref{fig_fibres} we plot the magnitude of $\Hr'$ for the fibres along a numerical solution trajectory computed with \Shake.
 
\begin{figure}
\centering
\includegraphics{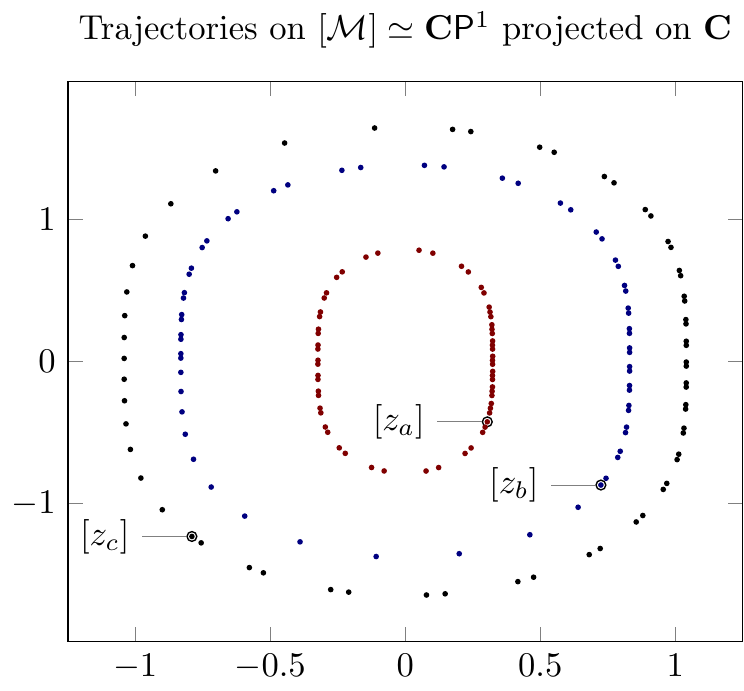}
\caption[Trajectory Plot]{
Simulation results for the Hamiltonian \eqref{eq_Hamiltonian_grav} with the vector $\gv \coloneqq (0,-1)$.
Both the \Shake and \Rattle methods use the midpoint rule as underlying symplectic integrator.
The time step is $h=0.1$ in all the examples.
We choose the initial conditions in \autoref{table_initialconditions}.
The phase trajectories for the three initial conditions $z_a$, $z_b$ and $z_c$ are displayed by first applying the Hopf map \eqref{eq_hopf} and then using a stereographic projection defined as
$\CC\times\RR \ni (\zeta,\rho) \mapsto \frac{\zeta}{\sqrt{|\zeta|^2+|\rho|^2} - \rho} \in \CC$.
The sphere $\Mq$ is thus identified to the Riemann sphere $\CC\mathsf{P}^1$, and the trajectories are plotted on the complex plane.
 
Note that we plot the projected trajectories on $\Mq$, and \Shake and \Rattle become the same mapping when considered as a mapping on $\Mq$.

The essential observation is that the simulated trajectories are closed, thus reproducing an important property of the exact solution.

The innermost trajectory, corresponding to the initial value $z_a$, is also plotted in \autoref{fig_fibres} along with the fibres lying above it.
The energy and the hidden constraint are also plotted for that trajectory in \autoref{fig_energyconstraints}.
}
\label{fig_numtests_grav}
\end{figure}

\begin{table}
\centering
\begin{tabular}{l|lll}
\toprule
 & $z_a$ & $z_b$ & $z_c$ \\
\midrule
$q_0$ & \scriptsize\(\num{-0.7865261200000000}\) & \scriptsize\(\num{-0.4963624948824013}\) & \scriptsize\(\num{0.3477491188213400}\) \\
$q_1$ & \scriptsize\(\num{-0.4043988000000000}\) & \scriptsize\(\num{-0.7319740436366664}\) & \scriptsize\(\num{-0.8131619010029159}\) \\
$p_0$ & \scriptsize\(\num{-0.3880746864163783}\) & \scriptsize\(\num{-0.4275775933953260}\) & \scriptsize\(\num{-0.4368285559113795}\) \\
$p_1$ & \scriptsize\(\num{0.2173391755798215}\) & \scriptsize\(\num{0.1225384882604160}\) & \scriptsize\(\num{-0.0837800227934176}\) \\
\bottomrule
\end{tabular}
\caption[table]{
Three initial conditions used in \autoref{fig_numtests_grav}.
All initial conditions lie on $\Mp$.
}
\label{table_initialconditions}
\end{table}

\begin{figure}
\centering
\subfloat[Energy plot]{\label{sfig_energy}\includegraphics{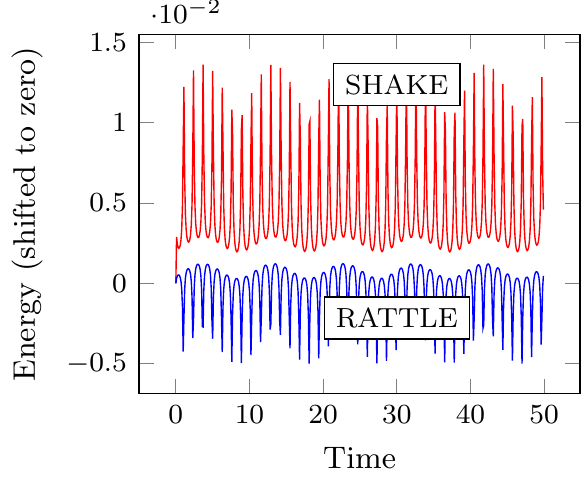}} \quad
\subfloat[Hidden constraint fulfillment for \Shake]{\label{sfig_constraint}\includegraphics{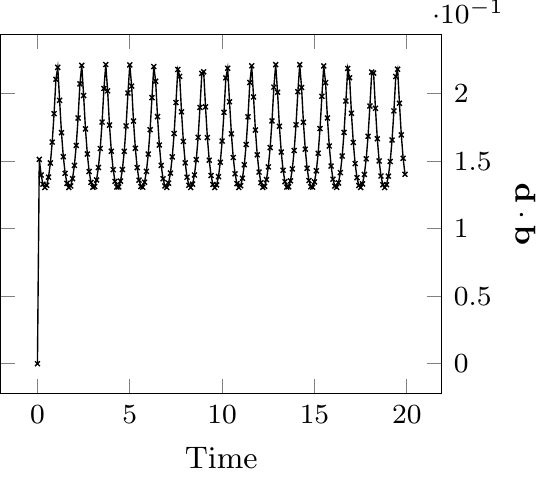}}
\caption[]{
Plot~\protect\subref{sfig_energy} shows the energy evolution for the \Shake and \Rattle methods using the initial condition $z_a$ in \autoref{table_initialconditions}, corresponding to the inner most trajectory in \autoref{fig_numtests_grav}.
Notice that there is a slight difference between \Shake and \Rattle, corresponding to the representation of the method either on $\Mpd$ or $\Mp$ as described above.
Also notice that there is no drift in energy, in accordance with the general theory of symplectic integrators.

Plot~\protect\subref{sfig_constraint} shows the fulfillment of the hidden constraint by the \Shake method using the same initial data~$z_a$.
The hidden constraint stays away from $\Mp$ at the same order of magnitude as the time step ($0.1$ in this case).
A similar plot for \Rattle would show that the hidden constraint is fulfilled up to machine accuracy.
Recall that this does not mean that \Rattle performs better than \Shake, as \Rattle can be seen as a mere optional post-processing step.
Note also that both methods fulfill the primary constraint \eqref{eq:hopf_constraint} up to machine accuracy.
}
\label{fig_energyconstraints} 
\end{figure}

\todo[author=OV,inline]{Add labels on \autoref{fig_fibres} ($\Mp$, $\Mpd$, etc.)?}

\begin{figure}
\centering
\includegraphics{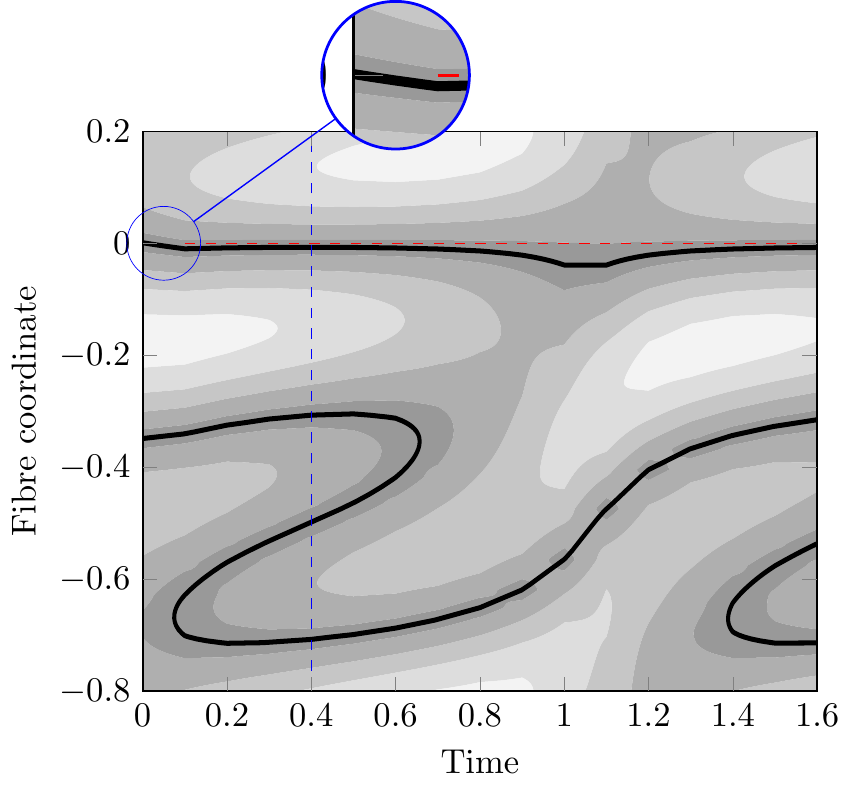}
\caption[Fibre Plot]{
Level curves showing the magnitude of $\Hr'$ (as given by~\eqref{eq_Hprime2}) for the fibres along the trajectory computed using \Shake with initial data $z_a$ given in \autoref{table_initialconditions}.
The points where $\left|\Hr'\right|=0$ are those points lying on $\Mp$.
Black corresponds to~$\left|\Hr'\right|=0$, so the black curves in the graphs give the intersection between~$\Mp$ and the fibres above the trajectory.
By construction, we are taking a coordinate in $[0,1)$ on the circle and shifting it along the computed trajectory so that $\Mpd$ is always at coordinate 0 (the dashed horizontal line in the figure).
The graph is periodic in the vertical direction because the fibres are circles; one such circle given by the dashed vertical line.
The graph is also periodic in the time direction, because the solutions are time periodic (see \autoref{fig_numtests_grav}).

Notice that, although the initial condition lies on $\Mp$, \Shake persistently stays off $\Mp$ after one step (see the zoomed-in section).
This is because $\Mpd$ is slightly offset from $\Mp$, in accordance with the theory in \autoref{sec:geometry_of_shake_and_rattle}.
The result of \Rattle is obtained by projecting vertically onto $\Mp$.

Note that $\Mp$ crosses the fibres either two, three or four times, in accordance with the classification described in \autoref{sub_hopf_lin_potential} (also compare to \autoref{fig_Hopf_fibre}).
In particular, there is another component of $\Mp$ crossing the fibre in the lower part of the figure.
The inflexion points of that curve are points where \autoref{assump:non_degeneracy} is not fulfilled anymore.
The differential algebraic equation is not well defined at those points, and both \Shake and \Rattle fail if such a point is reached.
This failure takes two forms: the Newton iteration may either fail to converge, or it may find a spurious solution by jumping to another component of $\Mp$ which is better behaved.
}
\label{fig_fibres}
\end{figure}

\section{Discussion}

The geometric version of the Dirac constraint algorithm \cite{Go1978} in the instance used here delivers a chain of submanifolds
\[
\Mp \hookrightarrow \Man  \hookrightarrow {\sympman}
\]
in which $\Mp$ is symplectic.
The geometric {\Rattle} algorithm delivers symplectic integrators on $\Mp$. However, ${\sympman}$, the intervening presymplectic manifold $\Man $, its coisotropy, and knowledge of its fibres, are essential to the algorithm.
If the fibres cannot be explicitly parametrised, the algorithm is still formally defined, but more computation would be required in practice---for example, by integrating the fibres numerically to roundoff error.
This is an extreme version of a situation common in numerical analysis, in which allowing a wider class of methods (e.g. implicit Runge--Kutta methods, for which implicit equations have to be solved numerically) enables a wider class of properties. 

If $\Man $ is not coisotropic, then the coisotropic embedding theorem \cite{Go1982} says that there exists a symplectic manifold ${\sympman}'$ such that $\Man $ is coisotropically embedded in ${\sympman}'$ (\autoref{rk:coisotropic_embedding}).
Thus, abstractly at least, one can extend the Hamiltonian on $\Man $ arbitrarily to ${\sympman}'$ and apply the geometric {\Rattle} algorithm, for the rest of the required structure is instrinsic to  $\Man $.
In specific examples it may be possible to carry this out by finding a suitable symplectic vector space ${\sympman}'$.
The same remark holds if the given data is a Hamiltonian on a presymplectic manifold, see \autoref{sec:indexone}.

However there remain many constrained problems which do not fall into the classes considered here. The most
fundamental one has data $({\sympman}, \Man', H)$ where $\Man'$ is a symplectic submanifold of the symplectic manifold ${\sympman}$. We do not know of symplectic integrators for this problem. 
They would provide symplectic integrators for a wide class of symplectic manifolds.
A very general situation is that provided by the geometric version of the Dirac constraint algorithm \cite{Go1978}, which, from presymplectic data $(\Man , \omega, H)$, produces a nested sequence of submanifolds 
\[
\Man_K\hookrightarrow  \Man_{K-1} \hookrightarrow\dots \hookrightarrow \Man_1 := \Man
\]
defined by
\[
\Man_{K+1} := \big\{\, x\in\Man_K\colon \dd H(x)\in \omega(\Man_K) \,\big\}
\]
and a (possibly nonunique) vector field $X$ such that $\incl{\Man_K}(\interior{X}{\omega-\dd H}) = 0$. One would like to integrate an index-$K$ DAE on $\Man$ or an index-$K+1$ DAE on a symplectic embedding of $\Man$ so as to preserve the constraints and $\incl{\Man_K}\omega$.

Finally, we mention another class of integrators for the holonomic case, known as \textsc{spark}, for Symplectic Partitioned Additive Runge--Kutta \cite{Ja1996}.
These generalise {\Rattle} to higher order. 
They are \emph{partitioned} (use different RK coefficients for the $q$ and $p$ components) and \emph{additive} (use different RK coefficients for the constraint and regular forces).
The holonomic assumption is used in two critical steps: first, it means that the flow of the constraint force is given by Euler's method; second, it means that the $q$-component of the constraint forces vanishes. 
This allows their RK coefficients to be arbitrary, which means that the RK coefficients of the $p$-component can be arbitrary, and can be chosen to include stages at the endpoints.
Thus, this approach does not immediately give an algorithm for problems of the type $({\sympman}, \Man', H)$ mentioned above.
The situation is similar to the relationship between splitting methods and RK methods; we do not know if \textsc{spark} can be adapted to more general constraints.

\section*{Acknowledgements}

O.~Verdier would like to acknowledge the support of the \href{http://wiki.math.ntnu.no/genuin}{GeNuIn Project}, funded by the \href{http://www.forskningsradet.no/}{Research Council of Norway}, the Marie Curie International Research Staff Exchange Scheme Fellowship within the \href{http://cordis.europa.eu/fp7/home_en.html}{European Commission's Seventh Framework Programme} as well as the hospitality of the \href{http://ifs.massey.ac.nz/}{Institute for Fundamental Sciences} of Massey University, New Zealand, where some of this research was conducted.
K.~Modin would like to thank the Marsden Fund in New Zealand, the \href{http://www.math.ntnu.no}{Department of Mathematics at NTNU} in Trondheim, the \href{http://www.kva.se}{Royal Swedish Academy of Science} and the \href{http://www.vr.se}{Swedish Research Council}, contract VR-2012-335, for support.
We would like to thank the reviewers for helpful suggestions.

\bibliographystyle{abbrvnat}
\bibliography{ref}
\end{document}